\numberwithin{equation}{section}
\newtheorem {proposition} {Proposition}[section]
\newtheorem {theorem}     [proposition]{Theorem}
\newtheorem {lemma}       [proposition]{Lemma}
\begin{document}
\setlength{\parindent}{4ex} \setlength{\parskip}{1ex}
\setlength{\oddsidemargin}{12mm} \setlength{\evensidemargin}{9mm}
\title{Global Well-posedness of the Parabolic-parabolic Keller- Segel Model in $L^{1}(\mathbb{R}^2)\times\!{L}^{\!\infty}(\mathbb{R}^2)$ and
$H^1_b(\mathbb{R}^2)\times\!{H}^1(\mathbb{R}^2)$}
\author{Chao Deng,\quad \quad Congming Li}

\date{}
\maketitle
\begin{abstract}
 In this paper, we study global well-posedness of the two-dimensional
 Keller-Segel model in Lebesgue space and Sobolev
 space. Recall that in the paper
 ``{\it Existence and uniqueness theorem on mild solutions to the Keller-Segel system in the scaling invariant space}, \emph{J. Differential Equations},
 {252}\,(2012), 1213--1228", Kozono, Sugiyama\,\&\,Wachi studied global well-posedness of $n$($\ge3$) dimensional Keller-Segel system and posted a question
 about the even local in time existence for the Keller-Segel system with
 $L^1(\mathbb{R}^2)\times{L}^\infty(\mathbb{R}^2)$ initial data. Here we
 give an affirmative answer to this question: in fact, we show the global in
 time existence and uniqueness for $L^1(\mathbb{R}^2)\times{L}^{\!\infty}(\mathbb{R}^2)$ initial data.
 Furthermore, we prove that for any
 $H^1_b(\mathbb{R}^2)\times{H}^1(\mathbb{R}^2)$ initial data with
 $H^1_b(\mathbb{R}^2):=H^1(\mathbb{R}^2)\cap{L}^\infty(\mathbb{R}^2)$,
 there also exists a unique global mild solution to the
 parabolic-parabolic Keller-Segel model. The estimates of
 ${\sup_{t>0}}t^{1-\frac{n}{p}}\|u\|_{L^p}$ for
 $(n,p)=(2,\infty)$ and the introduced special half norm, i.e. $\sup_{t>0}t^{\frac{1}{2}}(1\!+t)^{\!-\frac{1}{2}}\|\nabla{v}\|_{L^\infty}$, are crucial in our proof.
\end{abstract}
 {\small \noindent{\bf Keywords:}\, Keller-Segel model; Fourier transformation; well-posedness; decay property; parabolic-parabolic system.\\
\noindent{\bf Mathematics Subject Classification: \,}  92C17; 35K55.}
\maketitle
\section{Introduction}
 In this article, we study the following  two-dimensional (2D) Keller-Segel model:
\begin{align}
       &{{u}_t} - \Delta{u} + \nabla\cdot({u}\nabla{v})=0     &\text{in}&\hskip.23cm(0,\infty)\times\mathbb{R}^2,\label{k-s-1}\\
       &{{v}_t} - \Delta{v} + {v} - {u}=0                     &\text{in}&\hskip.23cm(0,\infty)\times\mathbb{R}^2,\label{k-s-2}\\
       &(u,v)|_{t=0}=(u_0,v_0)                                &\text{in}&\hskip.23cm\mathbb{R}^2,\label{k-s-3}
  \end{align}
 where $(t,x)\in(0,\infty)\times\mathbb{R}^2$, $u = u(t,x)$ and $v = v(t,x)$ are the scalar valued density of amoebae and
 the scalar valued concentration of chemical attractant, respectively, while $(u_0,v_0)$
 is the given initial data. For the derivation of the equation, we
 refer to Childress and Percus \cite{ChildressPercus:1981217} and Keller and Segel
 \cite{KellerSegel:1970399}. 

  Noticing that \eqref{k-s-1}--\eqref{k-s-2} is ``almost" scale invariant since ${{u}_t} - \Delta{u} + \nabla\cdot({u}\nabla{v})=0$
  and ${{v}_t} - \Delta{v}  - {u}=0$ are invariant under the following transformations
  $$(u(t,x),v(t,x))\!\rightarrow\! (\lambda^2 u(\lambda^2t,\lambda x),v(\lambda^2t,\lambda{x})) \quad\quad\text{for }\lambda>0.$$
  The idea of using a functional setting invariant by scaling is now classical
  and originates several works, see for instance, global existence of mild solutions to system \eqref{k-s-1}--\eqref{k-s-3} for initial
   $(u_0,v_0)\!\in\!H^{\frac{n}{r}-2,r}(\mathbb{R}^n)\times{H}^{\frac{n}{r},r}(\mathbb{R}^n)$
  with $\max\{1,\frac{n}{4}\}<r<\frac{n}{2}$ in \cite{KozonoSugiyama:20091}, for initial
   $(u_0,v_0)\in\!L^{\!{n}/{2}}_w(\mathbb{R}^n)\times${\small {\it BMO}}$(\mathbb{R}^n)$
  with $n\ge 3$ in \cite{KozonoSugiyama:20081467}, and for initial
  $(u_0,v_0)\in\!L^{\!\frac{n}{2}}(\mathbb{R}^n)\times\dot{H}^{2\alpha,\frac{n}{2\alpha}}(\mathbb{R}^n)$ with $n\ge3$ and
  $\frac{n}{2(n+2)}<\alpha\le\frac{1}{2}$ in \cite{KozonoSugiyamaWachi:20121213}.
  It is also known that apart from existence and uniqueness of mild solutions in scale invariant spaces,
  there are papers on asymptotic behaviors (see e.g.
  \cite{KageiMaekawa:20122951}, \cite{Winkler:20102889}) and stationary solutions (see e.g.
 \cite{GuoHwang:20101519}, \cite{LinNiTakagi:19981}).
  We also refer readers to, for instance
  \cite{IshidaYokota:20122469} and references cited therein, to see
  results on the quasilinear degenerate Keller-Segel system.

 The {first goal} of this paper is
  to answer {{Kozono,\;Sugiyama and Wachi}}'s question in \cite{KozonoSugiyamaWachi:20121213} of figuring out whether there exists a
  solution to system \eqref{k-s-1}--\eqref{k-s-3} even locally in time for $(u_0, v_0)\in{L}^1(\mathbb{R}^2)\times{L}^\infty(\mathbb{R}^2)$.
 In fact, we prove that there does exist a unique global {\it mild solution} to
  system \eqref{k-s-1}--\eqref{k-s-3} with $(u_0,v_0)\in{L^1(\mathbb{R}^2)\times{L}^\infty}(\mathbb{R}^2)$ by estimating $t\|u(t,\cdot)\|_{L^\infty}$, $\|u(t,\cdot)\|_{L^1}$ and ${t}^{\frac{1}{2}}\|\nabla{v}(t,\cdot)\|_{L^\infty}$
  in the $L^p$-framework, see for instance \cite{Kato:1984471}.
  Moreover, by exploring the special structure
  of system \eqref{k-s-1}--\eqref{k-s-2},  Deng and Li \cite{Deng:2012} established global existence of mild
  solution for initial data $(u_0,v_0)\in{L}^q(\mathbb{R}^2)\times{L}^\infty(\mathbb{R}^2)$ with
  $1<q<\infty$,  where global existence of mild solution for initial data $(u_0,v_0)\in{L^\infty(\mathbb{R}^2)}\times{L}^\infty(\mathbb{R}^2)$
  was left as {\it an open} question.

 The {second goal} of this paper is to study global well-posedness of system \eqref{k-s-1}--\eqref{k-s-3}
 with $H^1_b(\mathbb{R}^2)\times H^1(\mathbb{R}^2)$
 initial data. Up to now, there are several results on local and global existence
 of system \eqref{k-s-1}--\eqref{k-s-3} for $(u_0,v_0)\in{H}^{\nu}(\mathbb{R}^2)\times{H}^{\nu}(\mathbb{R}^2)$
 with $\nu >\!1$ (cf. Nagai, Senba and Yoshida
 \cite{NagaiSenbaYoshida:1997411}, and Yagi \cite{Yagi:1997241}) and result on global existence of system \eqref{k-s-1}--\eqref{k-s-3} with initial data
 $(u_0,v_0)\in H^{-1}(\mathbb{R}^2)\times{H}^1(\mathbb{R}^2)$ (cf. \cite{Deng:2012}).
 Recalling that $H^{\nu}(\mathbb{R}^2)$ $\!\hookrightarrow\!{H}^1_b(\mathbb{R}^2)$
 and $H^1(\mathbb{R}^2)$ can not be embedded into
 ${L}^\infty(\mathbb{R}^2)$, hence
  global existence of mild solution to system \eqref{k-s-1}--\eqref{k-s-3}
 with $(u_0,v_0)\in{H}^1_b(\mathbb{R}^2)\times{H}^1(\mathbb{R}^2)$ improves the previous
 results. The proof
 is based on a combination of the $L^2$-Fourier multiplier theory, the smoothing properties of heat
 kernel and the {\it new half norm} of $v$, i.e. $\sup_{t>0}{t^{\frac{1}{2}}}{(1+t)^{-\frac{1}{2}}}\|\nabla v\|_{L^\infty}$
 which balances the need for $t$ near zero and $t$ near infinity.
 With this unusual half norm, different form the usual scaling
 invariant ones, enables us to overcome the main difficulty and to
 close the iteration scheme.
 At last, global well-posedness of system \eqref{k-s-1}--\eqref{k-s-3} with initial data
  $(u_0,v_0)\in H^1(\mathbb{R}^2){\times}H^1(\mathbb{R}^2)$ is left
  as another {\it open} question.

 \!Next we recall some results concerning the parabolic-elliptic/parabolic-hyperbolic Keller-Segel systems.
 Concerning the parabolic-elliptic Keller-Segel model
\begin{align*}
 {{u}_t} = \Delta{u} - \nabla\cdot({u}\nabla{v}),\quad\quad
       & \Delta{v} = {v} - {u}.
  \end{align*}
It was conjectured by Childress and Percus
\cite{ChildressPercus:198461} that in a two-dimensional domain
$\Omega$ there exists a critical number $c^\ast$ such that if
$\int_\Omega u_0(x)dx <c^\ast$ then the solution exists globally in
time, and if $\int_\Omega u_0(x)dx > c^\ast$ then blowup happens.
For different versions of the Keller-Segel model, the conjecture has
been essentially proved; for a complete review of this topic, we
refer the reader to the paper \cite{Horstmann:2003103} and the
references therein, also see e.g. Diaz, Nagai, and Rakotoson
\cite{DiazNagaiRakotoson:1998156}, Blanchet, Dolbeault and Perthame
\cite{BlanchetDolbeaultPerthame:20061}. As for the
hyperbolic-hyperbolic Keller-Segel model
 \begin{align*}
 \partial_tu=\Delta u+\nabla\cdot(u\nabla{w}),\quad\quad \partial_t
 {w}=u,
  \end{align*}
 it was used in \cite{WangHillen:200845}
 for one dimensional case and was extended to
 multidimensional cases in \cite{LiPanZhao:2011}, and has been studied in
 \cite{LevineSleeman:1997683,OthmerStevens:19971044}
 and a comprehensive qualitative and numerical analysis was provided there. We refer readers to references
  \cite{CorriasPerthameZaag:2003141,CorriasPerthameZaag:20041,Eisenbach:2004,KellerSegel:1971225,KellerSegel:1971235,
 L-W2,N-I,Patlak:1953311,SleemanWardWei:2005790,YangChenLiuSleeman:2005432} for more
 discussions in this direction.

  Throughout this paper, both $\mathcal{F}f$ and $\widehat{f}$ stand for Fourier
 transform of $f$ with respect to space variable and
 $\mathcal{F}^{-\!1}$ stands the inverse Fourier transform. Let $C$ and $c$ be positive constants that may vary from line to line. $A\!\lesssim\! B$ stands
 for $A\!\le\!{CB}$ and $A\sim{B}$
 stands for $A\lesssim{B}\lesssim{A}$.
 For any $(p,q)\in[1,\infty]^2$, we denote
 $L^p(0,\infty)$, $L^q(\mathbb{R}^2)$, $H^s(\mathbb{R}^2)$ and $L^p(0,\infty;L^q(\mathbb{R}^2))$ by $L^p_t$, $L^q$, $H^s$ and $L^p_tL^q$, respectively.

\medskip
 \begin{theorem}\label{thm:1.1}
  For any initial data $(u_0,v_0)\!\in\!{L}^1(\mathbb{R}^2)\times L^\infty(\mathbb{R}^2)$
  with ${\sup_{t>0}}\|e^{t\Delta}u_0\|_{L^1}$ and ${\sup_{t>0}}t^{\frac{1}{2}}\|\nabla{e}^{t\Delta}v_0\|_{L^\infty}$
 being small, there exist a unique global mild solution $(u,v)$
  to system  \eqref{k-s-1}--\eqref{k-s-3} and positive constant $c$ such that
 \begin{align*}
   (u,v)\in {C}([0,\infty);L^1(\mathbb{R}^2))\times{C}_w([0,\infty);L^\infty(\mathbb{R}^2))\end{align*}
   with {$C_w([0,\infty);X)$ being the set of weakly-star continuous functions on $[0,\infty)$ valued in Banach space
   $X$}, and
 \begin{align}\label{eq:1.4}
 \displaystyle{\sup_{t>0}}\,(\|u\|_{L^1}&+t\|u\|_{L^\infty}+\frac{1}{4c}t^{\frac{1}{2}}\|{\nabla{v}}\|_{L^\infty})\nonumber\\
 &\le{2}\,\displaystyle{\sup_{t>0}}\,(\|e^{t\Delta}u_0\|_{L^1}+t\|e^{t\Delta}u_0\|_{L^\infty}+
  \frac{1}{4c}t^{\frac{1}{2}}\|e^{t\Delta}{\nabla{v}_0}\|_{L^\infty}),\end{align}
 which yields that $\|u\|_{L^\infty}\le{o}(t^{-1})$ and
  $\|\nabla v\|_{L^\infty}\le o(t^{-\frac{1}{2}})$ as $t\rightarrow \infty$.
 \end{theorem}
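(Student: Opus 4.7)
The plan is to solve Theorem~\ref{thm:1.1} via Banach's fixed point theorem applied to the mild system
\[
u(t) = e^{t\Delta} u_0 - \int_0^t \nabla\cdot e^{(t-s)\Delta}\bigl(u\nabla v\bigr)(s)\,ds,
\qquad
v(t) = e^{-t}e^{t\Delta}v_0 + \int_0^t e^{-(t-s)}e^{(t-s)\Delta}u(s)\,ds,
\]
carried out in the Banach space $X$ equipped with
\[
\|(u,v)\|_X := \sup_{t>0}\Bigl(\|u(t)\|_{L^1} + t\|u(t)\|_{L^\infty} + \tfrac{1}{4c}\,t^{1/2}\|\nabla v(t)\|_{L^\infty}\Bigr),
\]
where $c$ is the universal heat-kernel constant fixed below. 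The three components of the norm respect the parabolic scaling of \eqref{k-s-1}--\eqref{k-s-2}, and the half-norm $t^{1/2}\|\nabla v\|_{L^\infty}$ acts as a bridge carrying information between the $L^1$-scale and the $L^\infty$-scale of $u$ through the nonlinearity $\nabla\cdot(u\nabla v)$; without it the iteration cannot be closed.

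The backbone estimates come from the heat-kernel inequality $\|\nabla e^{\tau\Delta}f\|_{L^q(\mathbb{R}^2)}\lesssim \tau^{-1/2-1/p+1/q}\|f\|_{L^p}$ for $1\le p\le q\le\infty$. For the $\|u(t)\|_{L^1}$ contribution of the $u$-Duhamel, I would use the $L^1\to L^1$ bound with $\tau^{-1/2}$ loss together with $\|u\nabla v\|_{L^1}\le\|u\|_{L^1}\|\nabla v\|_{L^\infty}$; since $\int_0^t(t-s)^{-1/2}s^{-1/2}\,ds=\pi$, this yields a bilinear contribution $O(\|(u,v)\|_X^{2})$. For the weighted $t\|u(t)\|_{L^\infty}$ contribution I would split the Duhamel integral at $s=t/2$: on $[0,t/2]$ apply the $L^1\to L^\infty$ bound with $\tau^{-3/2}$ loss together with $\|u\nabla v\|_{L^1}\le\|u\|_{L^1}\|\nabla v\|_{L^\infty}$, and on $[t/2,t]$ apply the $L^\infty\to L^\infty$ bound with $\tau^{-1/2}$ loss together with $\|u\nabla v\|_{L^\infty}\le\|u\|_{L^\infty}\|\nabla v\|_{L^\infty}$. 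The two tight integrals $\int_0^{t/2}(t-s)^{-3/2}s^{-1/2}\,ds$ and $\int_{t/2}^{t}(t-s)^{-1/2}s^{-3/2}\,ds$ each evaluate to $O(t^{-1})$, again producing a bilinear $O(t^{-1}\|(u,v)\|_X^{2})$. For the weighted half-norm of $\nabla v$ I would apply the same two-interval split to the $v$-Duhamel, using $\|\nabla e^{\tau\Delta}u\|_{L^\infty}\lesssim\tau^{-3/2}\|u\|_{L^1}$ for small $s$ and $\lesssim\tau^{-1/2}\|u\|_{L^\infty}$ for $s$ near $t$, simply discarding the damping factor $e^{-(t-s)}$; both pieces are $O(t^{-1/2})$. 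This part of the estimate is linear in $(u,v)$, producing a constant $C$ times $\|(u,v)\|_X$, which is precisely what the normalization $1/(4c)$ with $c\ge C$ is designed to absorb.

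Combining the three estimates gives $\|\Phi(u,v)\|_X \le \|(e^{t\Delta}u_0,e^{t\Delta}v_0)\|_X + C\|(u,v)\|_X^{2}$, and a parallel bilinear computation gives the corresponding Lipschitz bound. With the smallness of the linear data assumed in the theorem, $\Phi$ becomes a contraction on the closed ball of radius twice the linear norm, and Banach's fixed point theorem yields the unique global mild solution together with \eqref{eq:1.4}. The main obstacle lies precisely in the weighted $L^\infty$ estimate for $u$: the Beta-type integrals above are tight, and any slight worsening of the exponents would destroy the closure; this is what forces the norm $X$ to incorporate all three pieces $\|u\|_{L^1}$, $t\|u\|_{L^\infty}$, and $t^{1/2}\|\nabla v\|_{L^\infty}$ simultaneously. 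Finally, strong continuity $u\in C([0,\infty);L^1)$ follows by approximating $u_0$ by Schwartz data and passing to the limit via the above bounds, weak-star continuity of $v$ into $L^\infty$ is inherited from $e^{-t}e^{t\Delta}v_0$ together with the stronger continuity of the Duhamel term once $u\in L^\infty_tL^1$, and the decays $\|u\|_{L^\infty}=o(t^{-1})$, $\|\nabla v\|_{L^\infty}=o(t^{-1/2})$ are obtained by first verifying them for $u_0\in L^1\cap L^2$ and $v_0\in L^\infty\cap\dot H^1$ where the linear parts decay strictly faster, and then closing by density.
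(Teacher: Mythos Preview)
Your approach coincides with the paper's: the same three-component norm (the paper writes it as a product $X\times Y$ with $\|u\|_X=\sup_t\|u\|_{L^1}+\sup_t t\|u\|_{L^\infty}$ and $\|w\|_Y=\sup_t t^{1/2}\|\nabla w\|_{L^\infty}$ and then invokes a tailored Picard lemma, but this is only cosmetic), the same heat-kernel smoothing bounds, the same $t/2$-splits for the weighted $L^\infty$ pieces of both $u$ and $\nabla v$, and the same density argument for the continuity statements. One minor correction to your summary line: the $1/(4c)$ normalization does not make the linear $v$-Duhamel contribution vanish but only reduces its coefficient to $1/4$, so the combined inequality should read $\|\Phi(u,v)\|_X \le A_0 + C\|(u,v)\|_X^{2} + \tfrac{1}{4}\|(u,v)\|_X$, after which the contraction closes on the ball of radius $2A_0$ provided $A_0<3/(32C)$, exactly as in the paper's Lemma~\ref{lem:2.5}.
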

 \noindent \textbf{Remark:}\! ($i$) Applying Lemma
\ref{lem:2.5} to Proposition \ref{pro:3.1}, we observe that
\eqref{eq:1.4} holds if
 \begin{align}\label{eq:1.5}
    {\sup_{t>0}}\,(\|e^{t\Delta}u_0\|_{L^1}+t\|e^{t\Delta}u_0\|_{L^\infty}+
      \frac{1}{4c}t^{\frac{1}{2}}\|e^{t\Delta}{\nabla{v}_0}\|_{L^\infty})\le\frac{3}{32c^2}.
  \end{align}
 Applying $\|\frac{1}{4\pi{t}}e^{-\frac{|\cdot|^2}{4t}}\|_{L^p}\le t^{-1+\frac{1}{p}}$ and Proposition
 \ref{def:2.4} to the left hand side of \eqref{eq:1.5}, it suffices to assume that
 $2\|u_0\|_{L^1}+\frac{1}{4c}\|\nabla{v}_0\|_{\dot{B}^{-1}_{\infty,\infty}}\!\le\!\frac{3}{32c^2}$,
 where
 \begin{align*}\|v_0\|_{\dot{B}^0_{\infty,\infty}}\sim\|\nabla{v}_0\|_{\dot{B}^{-1}_{\infty,\infty}}=\sup_{t>0}t^{\frac{1}{2}}\|e^{t\Delta}\nabla
 v_0\|_{L^\infty}\le
 \|v_0\|_{L^\infty}\end{align*}
 since Riesz transforms $\frac{\nabla}{\sqrt{-\Delta}}$ are bounded in homogeneous Besov
 spaces,  $\sqrt{-\Delta}$ maps $\dot{B}^{0}_{\infty,\infty}$ isomorphically
   onto $\dot{B}^{-1}_{\infty,\,\infty}$ and
   $\frac{1}{\sqrt{-\Delta}}$ maps $\dot{B}^{-1}_{\infty,\infty}$ isomorphically
   onto $\dot{B}^{0}_{\infty,\,\infty}$ (see \cite{Triebel:1983}, {\small Theorem 1,
   p.242}). Therefore, only $\dot{B}^0_{\!\infty,\infty}$ smallness
 of $v_0$ and $L^1$ smallness\clearpage \noindent of $u_0$ are needed.
 Local existence of mild solution follows directly by  changing time interval
 $[0,\infty)$ into $[0,T]$. However, if  $v_0(x_1,x_2)=1_{[0,1]}(x_1)$ and
  $(t,x_1,x_2)\in(0,\frac{1}{64})\times({{t}^{\frac{1}{2}}}\!,\,2{{t}^{\frac{1}{2}}})\times(-\infty,\infty)$, then there holds
 \begin{align*}
  t^{\frac{1}{2}}|\partial_1 e^{t\Delta}v_0(x_1,x_2)|&=\!\!\int_{-\infty}^\infty\!\int_{-\infty}^\infty
   \frac{1}{4\pi{t}}\frac{|x_1\!-\!y_1|}{\sqrt{4t}}e^{-\frac{(x_1-y_2)^2+(x_2-y_2)^2}{4t}}1_{[0,1]}(y_1)dy_1dy_2\\
  &=\!\frac{1}{\sqrt{\pi}}\!\int_{y_1\in[0,1]}\!\frac{|x_1\!-y_1|}{\sqrt{4t}}e^{-\frac{(x_1-y_1)^2}{4t}}d\frac{y_1-x_1}{\sqrt{4{t}}}\\
  &=\!\frac{1}{\sqrt{\pi}}\!\int_{\frac{-x_1}{\sqrt{4t}}}^{\frac{1-x_1}{\sqrt{4t}}}
     \!ze^{-z^2}dz=\!\frac{1}{\sqrt{\pi}}\!\int_{\frac{x_1}{\sqrt{4t}}}^{\frac{1-x_1}{\sqrt{4t}}}
     \!ze^{-z^2}dz\\
  &\ge \!\frac{1}{\sqrt{\pi}}\!\int_{1}^{3}
     \!ze^{-z^2}dz=c_0.
 \end{align*}
 For such $v_0$, if we set $\widetilde{v}_0=v_0/c_0$, then we have
 $${\lim_{T\rightarrow0^+}\sup_{0<t<T}}t^{\frac{1}{2}}\|\nabla
 e^{t\Delta}\widetilde{v}_0\|_{L^\infty}\ge 1.$$ Hence it seems difficult to prove local (global) existence of mild solution for arbitrary large
 $L^1(\mathbb{R}^2)\times{L}^\infty(\mathbb{R}^2)$
 initial data.

 \noindent($ii$)\, Proof of Theorem \ref{thm:1.1} also applies for
 $u_t-\Delta u+\nabla\cdot({u\nabla{v}})=0$ and $v_t-\Delta v+u=0$ with initial data $(u_0,v_0)\in
 L^1(\mathbb{R}^2)\times{L^\infty}(\mathbb{R}^2)$.

\medskip\medskip
 Here and hereafter, we set
 $\sigma(t)\!=t^\frac{1}{2}(1+t)^{-\frac{1}{2}}$. Then we state the
 following result.
 \begin{theorem}\label{thm:1.2}
 For any initial data
 $(u_0, v_0)\in{H^1_b}(\mathbb{R}^2)\times H^1(\mathbb{R}^2)$, there exist positive constants $\varepsilon_0$ and $c$ so that
 if $\|u_0\|_{L^\infty}+\|u_0\|_{H^1}+\frac{1}{4c}\|v_0\|_{H^1}\le \varepsilon_0$, then
  system \eqref{k-s-1}--\eqref{k-s-3} has a unique global solution $(u,v)$ satisfying
 \begin{align*}
    & (u\pm{v},\,u\pm{\sigma}\hskip.01cm \nabla{v},\,\nabla{u}\pm\nabla v) \in {C}([0,\infty);H^1(\mathbb{R}^2))\times{L}^{\!\infty}_t\!L^{\!\infty}
    \times {L}^2_tH^1.
 \end{align*}
  Moreover,
  $\|u\pm\frac{v}{4c}\|_{L^{\infty}_tH^1}+\|u\pm\frac{{\sigma}}{4c}\nabla{v}\|_{L^{\infty}_tL^{\infty}}+\|\nabla{u}\|_{L^{2}_tL^2}+\frac{1}{4c}
  \|\nabla{v}\|_{L^{2}_tH^1}\le{2}\hskip.02cm\varepsilon_0$.
 \end{theorem}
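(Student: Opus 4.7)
The plan is to set up a Banach-space contraction on a ball of radius $2\varepsilon_0$ in the norm equivalent to the one in the claimed estimate, namely
\begin{align*}
\|(u,v)\|_X:=\|u\|_{L^\infty_tH^1}+\tfrac{1}{4c}\|v\|_{L^\infty_tH^1}+\|u\|_{L^\infty_tL^\infty}+\tfrac{1}{4c}\|\sigma\nabla v\|_{L^\infty_tL^\infty}+\|\nabla u\|_{L^2_tL^2}+\tfrac{1}{4c}\|\nabla v\|_{L^2_tH^1},
\end{align*}
which is equivalent to the one in the theorem by applying the triangle inequality to $u\pm v/(4c)$ and to $u\pm\sigma\nabla v/(4c)$. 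I would iterate the mild formulation
\begin{align*}
u(t)&=e^{t\Delta}u_0-\int_0^t e^{(t-s)\Delta}\nabla\!\cdot\!(u\nabla v)(s)\,ds,\\
v(t)&=e^{-t}e^{t\Delta}v_0+\int_0^t e^{-(t-s)}e^{(t-s)\Delta}u(s)\,ds.
\end{align*}

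For the free evolutions, standard parabolic estimates handle $\|e^{t\Delta}u_0\|_{L^\infty_t(H^1\cap L^\infty)}$ and $\|\nabla e^{t\Delta}u_0\|_{L^2_tL^2}$, while for the $\sigma$-weighted piece the 2D Bernstein inequality $\|\nabla e^{t\Delta}v_0\|_{L^\infty}\lesssim t^{-1/2}\|\nabla v_0\|_{L^2}$ combined with $\sigma(t)t^{-1/2}=(1+t)^{-1/2}\le 1$ and the damping factor $e^{-t}$ yields $\sup_t\sigma(t)\|\nabla e^{-t(1-\Delta)}v_0\|_{L^\infty}\lesssim\|v_0\|_{H^1}$. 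For the bilinear $v$-Duhamel $B_v(u):=\int_0^t e^{-(t-s)}e^{(t-s)\Delta}u(s)\,ds$, the smoothing bound $\|\nabla e^{\tau\Delta}\|_{L^\infty\to L^\infty}\lesssim\tau^{-1/2}$ and $\int_0^t e^{-(t-s)}(t-s)^{-1/2}\,ds\lesssim 1$ give
\begin{align*}
\|\sigma\nabla B_v(u)\|_{L^\infty_tL^\infty}\lesssim\|u\|_{L^\infty_tL^\infty}.
\end{align*}
The $H^1$ and $L^2_tH^1$ controls on $B_v$ come from the energy identities for $v_t-\Delta v+v=u$ tested against $v$ and $-\Delta v$, which produce $\|v\|_{L^\infty_tH^1}^2+\|\nabla v\|_{L^2_tH^1}^2\lesssim\|v_0\|_{H^1}^2+\|u\|_{L^2_tH^1}^2$. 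For the $u$-energy, testing $u_t-\Delta u=-\nabla\!\cdot\!(u\nabla v)$ against $u$ and against $-\Delta u$, the crucial term $\tfrac12\int u^2\Delta v\le\tfrac12\|u\|_{L^4}^2\|\Delta v\|_{L^2}$ is treated by Ladyzhenskaya's inequality $\|u\|_{L^4}^2\le C\|u\|_{L^2}\|\nabla u\|_{L^2}$ and absorbed into $\|\nabla u\|_{L^2}^2$ under the smallness of $\|\Delta v\|_{L^2_tL^2}$, producing the $L^\infty_tH^1$ and $L^2_tL^2$ estimates via Gronwall.

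The delicate step, and the one I expect to be the main obstacle, is the $L^\infty_tL^\infty$ bound for the bilinear $u$-Duhamel. The crude application of $\|\nabla v\|_{L^\infty}\le\sigma(s)^{-1}\|\sigma\nabla v\|_{L^\infty_tL^\infty}$ in $\|B_u(t)\|_{L^\infty}\le\int_0^t(t-s)^{-1/2}\|u\|_{L^\infty}\|\nabla v\|_{L^\infty}\,ds$ produces a growing factor $\int_0^t(t-s)^{-1/2}\sigma(s)^{-1}\,ds\sim t^{1/2}$, which is useless uniformly in time. My proposal is to split $\nabla\!\cdot\!(u\nabla v)=\nabla u\cdot\nabla v+u\Delta v$, substitute $\Delta v=v_t+v-u$ from \eqref{k-s-2}, and integrate the resulting $uv_t$ contribution by parts in time against the semigroup, producing boundary terms plus bilinear expressions of the form $uv$, $u^2$, $v\Delta u$, and $v\nabla\!\cdot\!(u\nabla v)$ acted on by $e^{(t-s)\Delta}$ (possibly with one extra derivative). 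These are then controlled in an intermediate $L^p$ with $2<p<\infty$ via the interpolation $\|\nabla v\|_{L^p}\le\|\nabla v\|_{L^2}^{2/p}\|\nabla v\|_{L^\infty}^{1-2/p}$, the 2D Sobolev embedding $H^1\hookrightarrow L^p$, and the heat-kernel smoothing $\|e^{\tau\Delta}\|_{L^p\to L^\infty}\lesssim\tau^{-1/p}$, whose time-integrability against an $L^2_t$ factor closes the estimate. The specific role of the non-scale-invariant weight $\sigma(t)=t^{1/2}(1+t)^{-1/2}$ is precisely to reconcile the singular $t^{-1/2}$ behaviour of $\nabla v$ near $t=0$ with the $O(1)$ behaviour required as $t\to\infty$; a purely scale-invariant $t^{1/2}$-weight would fail at infinity.

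Combining all estimates gives a nonlinear bound of the form $\|\mathcal{T}(u,v)\|_X\le C\varepsilon_0+C\|(u,v)\|_X^2$ together with a matching Lipschitz estimate, so for $\varepsilon_0$ small enough the Banach fixed-point theorem yields a unique solution on the ball of radius $2\varepsilon_0$. The time continuity of $u\pm v/(4c)$ in $H^1$ follows by density of smooth data and a standard continuity argument for mild solutions, while the $L^\infty_tL^\infty$ and $L^2_tH^1$ regularities are built into the fixed-point space.
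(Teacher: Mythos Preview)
You correctly identify the $L^\infty_tL^\infty$ bound on the $u$-Duhamel as the crux and correctly diagnose why the naive estimate diverges like $t^{1/2}$. But your proposed cure---substituting $\Delta v=v_t+v-u$ and integrating $uv_t$ by parts in time---is unnecessary and not obviously closeable: it generates a $u_t v$ term, which after substituting the $u$-equation produces cubic expressions such as $v\nabla\!\cdot\!(u\nabla v)$ together with boundary contributions at $s=0,t$, and you have not shown these are uniformly controlled in $L^\infty$ by the norm you chose. The paper's fix is far simpler. For $t>2$ split the integral at $t-1$:
\begin{align*}
\Big\|\int_0^t e^{(t-s)\Delta}\nabla\!\cdot\!(u\nabla w)\,ds\Big\|_{L^\infty}
\lesssim\int_0^{t-1}(t-s)^{-3/2}\|u\nabla w\|_{L^1}\,ds
+\int_{t-1}^t(t-s)^{-1/2}\|u\nabla w\|_{L^\infty}\,ds.
\end{align*}
The first integral is bounded since $\int_0^{t-1}(t-s)^{-3/2}\,ds\le 2$, and $\|u\nabla w\|_{L^1}\le\|u\|_{L^\infty_tL^2}\|\nabla w\|_{L^\infty_tL^2}$ is already in the norm; on the second, $s>t-1>1$ forces $\sigma(s)^{-1}<\sqrt 2$, so it is $\lesssim\|u\|_{L^\infty_tL^\infty}\|\sigma\nabla w\|_{L^\infty_tL^\infty}$ directly. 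For $0<t\le 2$ one has $\sigma(s)^{-1}\lesssim s^{-1/2}$ and the Beta integral $\int_0^t(t-s)^{-1/2}s^{-1/2}\,ds=\pi$ closes the bound. The point you missed is that the $L^1\to L^\infty$ heat-kernel smoothing gives the integrable exponent $(t-s)^{-3/2}$ on the far piece, so the $H^1$ part of the norm already handles the long-time contribution without any algebraic manipulation of the nonlinearity.

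A secondary remark: the paper works entirely with semigroup/Fourier-multiplier estimates on the mild formulation (Plancherel plus maximal $L^2_tL^2$ regularity) and never invokes energy identities or Gronwall. Your hybrid energy approach for the $H^1$ bounds is a genuine departure; it can perhaps be made to work, but note that the paper's iteration space carries $\|\nabla u\|_{L^2_tH^1}$ rather than the $\|\nabla u\|_{L^2_tL^2}$ you wrote, and this stronger norm is exactly what closes the estimate for $\|\nabla B(u,w)\|_{L^2_t\dot H^1}$ via the interpolation $\|\nabla u\|_{L^4_tL^4}^2\lesssim\|u\|_{L^\infty_tH^1}\|\nabla u\|_{L^2_tH^1}$.
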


\medskip

\noindent \textit{Plan of the paper:} In Sect. \!2 we introduce
 several preliminary lemmas, while in Sect. \!3 we prove Theorems \ref{thm:1.1} and \ref{thm:1.2}.
 \section{Preliminaries}
In this section, we list several known lemma and prove some key
lemmas which will be used in proving the well-posedness of the
parabolic-parabolic chemotaxis. The first lemma given below is
concerned with initial data belonging to $H^s(\mathbb{R}^2)$. For
simplicity, here and hereafter, we omit the space domain in various
function spaces, for instance $H^1(\mathbb{R}^2)$ is denoted by
$H^1$, if there is no confusion.
 \begin{lemma}\label{lem:2.1}
 Let $n=2$, $\Lambda=\sqrt{-\Delta}$, $(s,\hskip.04cm \delta,\hskip.04cm  r,\hskip.04cm \rho)\in(-\infty,\infty)\times[0,\infty)\times[1,\infty]\times[2,\infty]$ and $v\in{H}^s$.
 If $m(t,\xi)\in{L}^{r}_tL^{\!\infty}_\xi$ and
 $m(t,D)v=\!\mathcal{F}^{-1}m(t,\xi)\widehat{v}(\xi)$, then we get
 \begin{align}\label{eq:2.1}
      \|m(t,D)v\|_{L^r_tH^s}\lesssim\|m\|_{{L}^{r}_tL^{\infty}_\xi}\|v\|_{H^s};
  \end{align}
 Else if ${m}_\delta(t,\xi):=m(t,\xi)|\xi|^{\delta}\in\!{L^{\!\infty}_\xi{L}^{\rho}_t}$ and
 $m_\delta(t,D)v\!=\!\mathcal{F}^{-1}m_\delta(t,\xi)\widehat{v}(\xi)$, then we get
 \begin{align}\label{eq:2.2}
      \|m_\delta(t,D)v\|_{L^{\rho}_t{H}^{s}}\lesssim\|m_\delta\|_{L^\infty_\xi{L}^{\rho}_t}\|v\|_{H^s}.
  \end{align}
 \end{lemma}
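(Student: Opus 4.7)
The plan is to reduce both inequalities to pointwise-in-$t$ estimates via Plancherel on the $H^s$ norm, and then simply integrate in $t$, invoking Minkowski's integral inequality for the second assertion to swap the order of the $L^\rho_t$ and $L^\infty_\xi$ norms.

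For \eqref{eq:2.1}, I would fix $t$ and write
\begin{align*}
  \|m(t,D)v\|_{H^s}^2 = \int_{\mathbb{R}^2} (1+|\xi|^2)^s\,|m(t,\xi)|^2\,|\widehat{v}(\xi)|^2\,d\xi \le \|m(t,\cdot)\|_{L^\infty_\xi}^2\,\|v\|_{H^s}^2.
\end{align*}
Taking the $L^r_t$ norm of both sides yields $\|m(t,D)v\|_{L^r_tH^s}\le \|m\|_{L^r_t L^\infty_\xi}\|v\|_{H^s}$, which is exactly \eqref{eq:2.1}. Note the argument does not use the dimension $n=2$ or any sign condition on $s$; it only needs the Fourier-side definition of $H^s$.

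For \eqref{eq:2.2} the same pointwise step gives $\|m_\delta(t,D)v\|_{H^s}\le \|m_\delta(t,\cdot)\|_{L^\infty_\xi}\|v\|_{H^s}$ for each $t$, so after taking $L^\rho_t$ I obtain
\begin{align*}
  \|m_\delta(t,D)v\|_{L^\rho_t H^s} \le \bigl\|\,\|m_\delta(t,\cdot)\|_{L^\infty_\xi}\bigr\|_{L^\rho_t}\,\|v\|_{H^s}.
\end{align*}
The right-hand side is $\|m_\delta\|_{L^\rho_t L^\infty_\xi}$, whereas the target bound is in terms of $\|m_\delta\|_{L^\infty_\xi L^\rho_t}$. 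The remaining step is the order-swap, for which I would invoke Minkowski's integral inequality: for $1\le p\le q\le\infty$ one has $\|f\|_{L^p_t L^q_\xi}\le\|f\|_{L^q_\xi L^p_t}$ when the inner exponent dominates. Here this takes the form $\|m_\delta\|_{L^\rho_t L^\infty_\xi}\le \|m_\delta\|_{L^\infty_\xi L^\rho_t}$, valid for every $\rho\in[1,\infty]$ (and in particular for the stated range $\rho\in[2,\infty]$).

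The hypothesis $\delta\ge 0$ is not used in these estimates; it merely ensures $m_\delta$ is well-defined and fits the way this lemma will be applied in Section 3. The main (and essentially only) conceptual point is the Minkowski swap at the end; everything else is a one-line Plancherel computation.
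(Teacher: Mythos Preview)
Your proof is correct and uses the same ingredients as the paper (Plancherel, H\"older, Minkowski), but for \eqref{eq:2.2} you apply them in a different order, and this has a small payoff. The paper first applies Plancherel to write the left side as $\|m_\delta\,\langle\xi\rangle^s\widehat v\|_{L^\rho_t L^2_\xi}$, then swaps to $L^2_\xi L^\rho_t$ via Minkowski (which is exactly where the hypothesis $\rho\ge 2$ is spent), and only then uses H\"older in $\xi$ to extract $\|m_\delta\|_{L^\infty_\xi L^\rho_t}$. You instead apply H\"older in $\xi$ pointwise in $t$ first, obtaining $\|m_\delta\|_{L^\rho_t L^\infty_\xi}\|v\|_{H^s}$, and then swap $L^\rho_t L^\infty_\xi\le L^\infty_\xi L^\rho_t$; since the inner exponent is $\infty$, this Minkowski step holds for every $\rho\in[1,\infty]$, so your argument actually proves \eqref{eq:2.2} without the restriction $\rho\ge 2$. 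Both routes are equally short; yours simply explains why the constraint $\rho\ge 2$ in the statement is not truly needed for this lemma.
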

 \begin{proof}
  Proof of \eqref{eq:2.1} follows from classical Fourier multiplier theory
  and, for readers convinience, we give the detail proof as follows:
 \begin{align}\label{eq:2.3}
      \|m(t,D)v\|_{L^r_tH^s}
      &\lesssim\|m(t,\xi)(1+|\cdot\!|^2)^{\frac{s}{2}}\widehat{v}(\cdot)\|_{L^r_tL^2_\xi}\nonumber\\
      &\lesssim\|m\|_{{L}^{r}_tL^{\infty}_\xi}\|(1+|\cdot\!|^2)^{\frac{s}{2}}\widehat{v}(\cdot)\|_{L^2_\xi}\;\;\nonumber\\
      &\lesssim\|m\|_{{L}^{r}_tL^{\infty}_\xi}\|v\|_{H^s},
  \end{align}
 where we have used Plancherel equality twice.

   In order to prove \eqref{eq:2.2}, by making use of Plancherel equality, Minkowski's inequality, H\"{o}lder's inequality and Plancherel equality again,
   we get
 \begin{align}
   \|m_\delta(t,D){v}\|_{L^{\rho}_t{H}^{s}}
  & \lesssim\|m_\delta\;(1+|\cdot\!|^2)^{\frac{s}{2}}\widehat{v}(\cdot)\|_{L^{\rho}_tL^2_\xi}\nonumber\\
  & \lesssim\|m_\delta\;(1+|\cdot\!|^2)^{\frac{s}{2}}\widehat{v}(\cdot)\|_{L^2_\xi{L^{\rho}_t}}\nonumber\\
  & \lesssim \|m_\delta\|_{L^\infty_\xi{L}^{\rho}_t}\|(1+|\cdot\!|^2)^{\frac{s}{2}}\widehat{v}(\cdot)\|_{L^2_\xi}\nonumber\\
  & \lesssim
  \|m_\delta\|_{L^\infty_\xi{L}^{\rho}_t}\|v\|_{H^s}.\label{eq:2-4}
 \end{align}
 Hence, we finish the proof.
 \end{proof}
  The skill used in the above Lemma will be used repeatedly in the following parts.
  The next Lemma is devoted to estimate the bilinear term which is known as
  the maximal $L^p_tL^q$ regularity result for the heat kernel (cf. \cite{LemarierRieusset:2002}, Theorem 7.3, p.\,64).
 \begin{lemma}\label{lem:2-2} $({\rm Maximal}$ $L^{p}_tL^q$ ${\rm regularity\; for\; heat\; kernel} )$\
 The operator $T$ defined by
 \begin{align}\label{eq:2.5}
     g(t,x)\mapsto{Tg(t,x)}=\!\int_0^t\!e^{(t-\tau)\Delta}\Delta{g}(\tau,x)d\tau
 \end{align}
 is bounded from $L^p_tL^q$ to $L^p_tL^q$ with $1\!<p<\!\infty$ and $1\!<q\!<\infty$.%
 \end{lemma}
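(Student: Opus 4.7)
The plan is to view $T$ as a temporal convolution with an operator-valued kernel on $L^q(\mathbb{R}^2)$, obtain an $L^2_t L^q$ bound by Plancherel in time combined with Mikhlin multiplier estimates in space, and then extrapolate to general $1<p<\infty$ via a vector-valued Calder\'on--Zygmund argument.

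First I would extend $g(\tau,\cdot)$ by zero for $\tau\le 0$, so that $Tg(t)=(K*g)(t)$ with the operator-valued kernel $K(s):=\Delta\, e^{s\Delta}$ for $s>0$ and $K(s)=0$ for $s\le 0$, viewed as a bounded linear operator on $L^q(\mathbb{R}^2)$ for each $s>0$. This casts the bound as $L^p(\mathbb{R};L^q(\mathbb{R}^2))$-boundedness of a convolution with an operator-valued kernel.

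For the base case $p=2$, I would take the Fourier transform in time only. The symbol, in the spatial frequency $\xi$, is
\begin{equation*}
 m(\tau,\xi)=\frac{-|\xi|^2}{i\tau+|\xi|^2}.
\end{equation*}
For each fixed $\tau\in\mathbb{R}$, the function $m(\tau,\cdot)$ satisfies Mikhlin's condition on $\mathbb{R}^2$ with constants independent of $\tau$ (by direct differentiation one checks $|\xi|^{|\alpha|}|\partial_\xi^\alpha m(\tau,\xi)|\lesssim 1$ uniformly in $\tau$); hence $m(\tau,\cdot)$ is a Fourier multiplier on $L^q(\mathbb{R}^2)$ with norm bounded uniformly in $\tau$. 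Combining this with Plancherel in $\tau$ yields $\|Tg\|_{L^2_tL^q}\lesssim\|g\|_{L^2_tL^q}$.

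Finally, to extrapolate to all $1<p<\infty$, I would apply the Benedek--Calder\'on--Panzone theorem for operator-valued singular integrals on $L^p(\mathbb{R};L^q(\mathbb{R}^2))$. The H\"ormander condition to be verified reduces to the heat semigroup estimates $\|K(s)\|_{L^q\to L^q}\lesssim s^{-1}$ and $\|\partial_s K(s)\|_{L^q\to L^q}\lesssim s^{-2}$, which both follow from analyticity of $e^{s\Delta}$ on $L^q(\mathbb{R}^2)$, i.e.~$\|\Delta^k e^{s\Delta}\|_{L^q\to L^q}\lesssim s^{-k}$, together with $\partial_s e^{s\Delta}=\Delta e^{s\Delta}$. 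The main obstacle is this operator-valued H\"ormander verification uniformly in $q\in(1,\infty)$; once in place, the Benedek--Calder\'on--Panzone machinery delivers the result and endpoint restrictions $p,q\neq 1,\infty$ appear naturally. The full details are carried out in Lemari\'e-Rieusset [Theorem 7.3, p.~64], which is the reference cited in the statement.
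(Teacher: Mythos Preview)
Your proposal is correct and in fact goes beyond what the paper does: the paper states this lemma as a known result and simply cites Lemari\'e-Rieusset \cite{LemarierRieusset:2002}, Theorem~7.3, p.~64, without supplying any proof. Your sketch---operator-valued convolution kernel, $L^2_t L^q$ base case via Plancherel in time plus Mikhlin in space, then extrapolation by the Benedek--Calder\'on--Panzone theorem using the analyticity bounds $\|\Delta^k e^{s\Delta}\|_{L^q\to L^q}\lesssim s^{-k}$---is the standard argument found in that reference, so you are aligned with the paper's (implicit) approach.
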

 The next Lemma is also dedicated to estimating the bilinear term.
 \begin{lemma}\label{lem:2.3}
 For any $(s,\hskip.02cm c,\hskip.02cm c_1,\hskip.02cm p,\hskip.02cm  r, \hskip.02cm
 p_1)\in\mathbb{R}\times(0,\infty)^2\times[2,\infty]\times[1,2]\times[1,\infty]$, ${p}_1\ge{r}$ and
 $0\le \theta \!< 2(1+\!\frac{1}{p_1}-\!\frac{1}{r})$, if\hskip.2cm $m(t,\xi) = \frac{c_1}{e^{\,ct|\xi|^2}}$
 and $\mu(t,\xi)=\frac{{c}_1}{e^{\,ct+ct|\xi|^2}}$, then there exists constant $C_{\theta,p_1,r}$ depending on $\theta$, $p_1$ and
 $r$ such that
 have
 \begin{align}\label{eq:2.6}
  &\|\!\int_0^t\!\!m(t\!-\!\tau,D)\Lambda^{{2+\frac{2}{p}-\frac{2}{r}}} F(\tau,x)\,d\tau\|_{L^p_tH^s}\lesssim\|F\|_{L^r_tH^s},
 \\ \label{eq:2.7}
  &\|\!\int_0^t\!\mu(t\!-\!\tau,D)\Lambda^{\theta}F(\tau,x)\,d\tau\|_{L^{p_1}_t\dot{H}^s}\lesssim{C}_{\theta,p_1,r}\|F\|_{L^r_t\dot{H}^s}.
  \end{align}
 \end{lemma}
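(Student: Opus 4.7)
The plan is to reduce both estimates to the case $s=0$ and then exploit Plancherel on the Fourier side. Since $m(t-\tau,D)$, $\mu(t-\tau,D)$, $\Lambda^\alpha$ and $\Lambda^\theta$ are all Fourier multipliers, they commute with $(1-\Delta)^{s/2}$ (used for \eqref{eq:2.6}) and with $\Lambda^s$ (used for \eqref{eq:2.7}); applying the $s=0$ case to $(1-\Delta)^{s/2}F$ or $\Lambda^sF$ then recovers the full statements, so the core task is the $s=0$ case.

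For \eqref{eq:2.6}, set $\alpha=2+\tfrac{2}{p}-\tfrac{2}{r}$ and note that, by Plancherel, the left-hand side equals $\|\int_0^t c_1|\xi|^\alpha e^{-c(t-\tau)|\xi|^2}\widehat{F}(\tau,\xi)\,d\tau\|_{L^p_tL^2_\xi}$. I would use Minkowski's integral inequality twice, which is admissible exactly because $p\ge 2\ge r$. First swap $L^p_t$ and $L^2_\xi$ so the time convolution sits inside. For fixed $\xi$, Young's inequality with $1+\tfrac1p=\tfrac1q+\tfrac1r$ controls it by $|\xi|^\alpha\,\|e^{-c\cdot|\xi|^2}\|_{L^q_t}\,\|\widehat{F}(\cdot,\xi)\|_{L^r_t}$. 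A direct scaling gives $\|e^{-c\cdot|\xi|^2}\|_{L^q_t}\sim|\xi|^{-2/q}$, and the exponent identity $\alpha=2/q$ (equivalent to the definition of $\alpha$) makes the $\xi$-weights cancel exactly. A second Minkowski swap then converts the remaining $L^2_\xi L^r_t$ norm into $\|F\|_{L^r_tL^2_x}$.

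For \eqref{eq:2.7}, the same Minkowski maneuver is no longer available when $p_1<2$, but the extra factor $e^{-c(t-\tau)}$ in $\mu$ provides enough decay to close the argument pointwise in $t$. Using that a Fourier multiplier acts on $L^2_x$ by its $L^\infty_\xi$-norm,
\[
\|\mu(t-\tau,D)\Lambda^\theta F(\tau)\|_{L^2_x}\le\sup_{\xi}\bigl(|\xi|^\theta e^{-c(t-\tau)(1+|\xi|^2)}\bigr)\|F(\tau)\|_{L^2_x}.
\]
Optimizing $|\xi|^\theta e^{-cs(1+|\xi|^2)}$ at its maximum $|\xi|^2=\theta/(2cs)$ with $s=t-\tau$ yields the pointwise bound $C_\theta\,s^{-\theta/2}e^{-cs}$. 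Young's convolution in $t$ with $1+\tfrac1{p_1}=\tfrac1q+\tfrac1r$, admissible because $p_1\ge r$, finishes the job with constant $C_{\theta,p_1,r}\sim C_\theta\,\|s^{-\theta/2}e^{-cs}\|_{L^q_s(0,\infty)}$. The exponential tail ensures integrability at infinity; integrability at $s=0$ requires $q\theta/2<1$, which is exactly $\theta<2(1+\tfrac1{p_1}-\tfrac1r)$, precisely the hypothesis.

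The main obstacle is exponent bookkeeping: in \eqref{eq:2.6} the choice of $\alpha$ is forced by matching the scaling of $\|e^{-c\cdot|\xi|^2}\|_{L^q_t}$, and in \eqref{eq:2.7} the strict upper bound on $\theta$ is forced by the local integrability of $s^{-\theta/2}$ at $s=0$. Degenerate endpoints (e.g.\ $p=\infty$, $r=1$ in \eqref{eq:2.6}, giving $q=\infty$, $\alpha=0$, and the trivial inequality $\|\int_0^tF\,d\tau\|_{L^\infty_tL^2_x}\le\|F\|_{L^1_tL^2_x}$) must be inspected separately, but they fit into the same scheme without difficulty.
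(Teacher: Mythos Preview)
Your proposal is correct and follows essentially the same route as the paper: for \eqref{eq:2.6} the paper also applies Plancherel, swaps $L^p_tL^2_\xi$ to $L^2_\xi L^p_t$ via Minkowski (using $p\ge2$), applies Young's inequality in $t$ with the same exponent $q$, observes that $\sup_\xi\|e^{-c\cdot|\xi|^2}|\xi|^{2+2/p-2/r}\|_{L^q_t}<\infty$ by the scaling identity you noted, and swaps back via Minkowski (using $r\le2$); for \eqref{eq:2.7} the paper likewise bounds the multiplier pointwise in $t$ by $e^{-c(t-\tau)}(t-\tau)^{-\theta/2}$ and closes with Young's inequality in time, the constraint $\theta<2(1+1/p_1-1/r)$ arising exactly as you say from local integrability at $0$. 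The only cosmetic difference is that the paper carries the weight $\langle\xi\rangle^s$ (respectively $|\xi|^s$) through the computation rather than reducing to $s=0$ first.
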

 \begin{proof}
 In order to prove \eqref{eq:2.6}, setting $\langle\xi\rangle=\sqrt{1+|\xi|^2}$ and by using Plancherel equality, the
 Minkowski inequality, the Young inequality and the Minkowski inequality  as well as Plancherel equality, we have
  \begin{align*}
   \|\!\int_0^t\!m(t\!-\!\tau,D)&\Lambda^{2+\frac{2}{p}-\frac{2}{r}}F(\tau,x)d\tau\|_{L^{p}_t{H}^{s}}
   \!=\!\|\!\int_0^t\!m(t\!-\!\tau,\xi)|\xi|^{2+\frac{2}{p}-\frac{2}{r}}\langle\xi\rangle^s\widehat{F}(\tau,\xi)d\tau\|_{L^p_tL^2_\xi}&{}\\
   &\lesssim\|\int_0^tm(t-\tau,\xi)|\xi|^{2+\frac{2}{p}-\frac{2}{r}}\langle\xi\rangle^s\widehat{F}(\tau,\xi)d\tau\|_{L^2_\xi{L}^p_t} &{}\\
   &\lesssim\Big\|\|m(\cdot,\xi)|\xi|^{2+\frac{2}{p}-\frac{2}{r}}\|_{L^{\frac{p\hskip.04cm r}{p\hskip.04cm r+r-p}}_t}\|\langle\xi\rangle^s\widehat{F}(\cdot,\xi)\|_{L^r_t}\Big\|_{L^2_\xi} &{}\\
   &\lesssim\sup_{\xi\in\mathbb{R}^2}\|m(\cdot,\xi)|\xi|^{2+\frac{2}{p}-\frac{2}{r}}\|_{L^{\frac{p\hskip.04cm r}{p\hskip.04cm r+r-p}}_t}\|\langle\xi\rangle^s\widehat{F}(\cdot,\xi)\|_{L^2_\xi L^r_t} &{}\\
   &\lesssim\|\langle\xi\rangle^s\widehat{F}(\cdot,\xi)\|_{ L^r_tL^2_\xi}&{}\\
   &\lesssim\|F\|_{L^r_tH^s}.
  \end{align*}
 It remains to prove \eqref{eq:2.7}. Using the $L^1$ integrability of $e^{-ct}t^{-\frac{\theta}{2+\frac{2}{p_1}-\frac{2}{r}}}$, we
 get
  \begin{align*}
     \|\!\int_0^t\!\mu(t\!-\!\tau,D)\Lambda^{\theta}&F(\tau,x)d\tau\|_{L^{p_1}_t\dot{H}^{s}}
   = \|\int_0^te^{-c(t-\tau)}(t\!-\!\tau)^{-\frac{\theta}{2}}\|{F}\|_{\dot{H}^s}d\tau\|_{L^{p_1}_t}&{}\\
   &\lesssim\Big(\int_0^t e^{-c{\frac{p_1\hskip.04cm r}{p_1\hskip.04cm r+r-p_1}}t}
             t^{-\frac{\theta {p_1\hskip.04cm r}} {2(p_1\hskip.04cm r+r-p)} }dt\Big)^{1+\frac{1}{p_1}-\frac{1}{r}}\|F\|_{L^r_t\dot{H}^s} &{}\\
   &\lesssim{C}_{\theta,p_1,r}\|F\|_{L^r_t\dot{H}^s}.
  \end{align*}
  Therefore, we finish the whole proof.
 \end{proof}
 Let us state the equivalent definition of Besov space $\dot{B}^s_{p,q}:=\dot{B}^s_{p,q}(\mathbb{R}^2)$ using heat
 semigroup method (for a proof see, for instance \cite{Triebel:1983} {\small p.192} or \cite{LemarierRieusset:2002}  {\small Theorem 5.4, p.45}).
 \begin{proposition}\label{def:2.4}\!
 Let $(s,p,q)\!\in\!(-\infty,0)\times\![1,\infty]^2\!.\!$ The homogeneous Beosv space
 $\dot{B}^s_{\!p,q}$ is defined as the set of tempered distribution $f$ such that
  \begin{align*}
  &\|f\|_{\dot{B}^{s}_{p,q}}=\Big(\int_0^\infty(t^{\frac{-s}{2}}\|e^{t\Delta}f\|_{L^p})^q\frac{dt}{t}\Big)^{\frac{1}{q}}\quad
   \;\text{ if }\; 1\le q<\infty,&{}\\
  &\|f\|_{\dot{B}^s_{p,\infty}}\!=\sup_{t>0}t^{\frac{-s}{2}}\|e^{t\Delta}f\|_{L^p}\quad\hskip1.62cm\text{ if  }\; q=\infty.\hskip.7cm
  \end{align*}
 \end{proposition}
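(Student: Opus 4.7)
The statement labelled Proposition \ref{def:2.4} is phrased as a \emph{definition}: it declares the homogeneous Besov space $\dot{B}^s_{p,q}$ to be the set of tempered distributions $f$ for which the stated heat-semigroup quantity is finite. There is therefore no identity or equivalence that needs to be established inside this paper, and no proof in the usual sense to give. The parenthetical citations to Triebel and Lemari\'e-Rieusset are pointers to the standard references where the consistency of this characterization with the Littlewood--Paley formulation is recorded; that identification is not itself claimed here.

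If one nevertheless wished to sanity-check that the right-hand side defines a reasonable object for the parameter range $s<0$, $1\le p,q\le\infty$, the plan would be the following. First, observe that for $f\in\mathcal{S}'(\mathbb{R}^2)$ the convolution $e^{t\Delta}f$ is smooth when it is well-defined, and that the Gauss kernel $G_t(x)=(4\pi t)^{-1}e^{-|x|^2/(4t)}$ satisfies $\|G_t\|_{L^{p'}}\lesssim t^{-1+1/p}$, which controls the $L^p$ norm of $e^{t\Delta}f$ for tempered $f$. Next, the weight $t^{-s/2}$ is an increasing power of $t$ for $s<0$, and convergence of the integral for $t\to 0$ comes from the regularizing effect of the heat semigroup while convergence for $t\to\infty$ comes from the decay encoded by the condition $\|f\|_{\dot{B}^s_{p,q}}<\infty$ itself. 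Finally, one checks that $\|\cdot\|_{\dot{B}^s_{p,q}}$ is positively homogeneous and satisfies the triangle inequality (immediate from Minkowski in $L^q(\mathbb{R}_+;dt/t)$), hence is a seminorm, and one adopts the standard convention of working modulo polynomials so that the kernel collapses and one obtains a genuine norm.

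Since the claim is a definition, there is no main obstacle to identify: all of the technical content (in particular the nontrivial Littlewood--Paley equivalence and the proper handling of polynomial ambiguities) is deferred to the cited references. In the present article only the case $q=\infty$ is actually invoked, precisely to rewrite the smallness hypothesis on $v_0$ as $\|v_0\|_{\dot{B}^0_{\infty,\infty}}\sim\sup_{t>0}t^{1/2}\|e^{t\Delta}\nabla v_0\|_{L^\infty}$ in the remark following Theorem \ref{thm:1.1}.
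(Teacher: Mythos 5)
Your reading matches the paper exactly: the paper states this proposition as a known equivalent characterization of $\dot{B}^s_{p,q}$ and gives no proof, deferring instead to Triebel (p.~192) and Lemari\'e-Rieusset (Theorem 5.4, p.~45), which is precisely what you do. Your additional sanity-check sketch is harmless and consistent with the standard theory, so there is nothing to correct.
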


 The last lemma of this section is a slightly generalized version about the well-known
 Picard contraction principle (see for instance \cite{LemarierRieusset:2002}, {\small Theorem 13.2, p.124}) which is used to prove
 the main results concerning well-posedness of \eqref{k-s-1}--\eqref{k-s-3} with
 $(u_{0},v_{0})$ either belonging to $L^1(\mathbb{R}^2)\times L^\infty(\mathbb{R}^2)$ or belonging to $H^1_b(\mathbb{R}^2)\times H^1(\mathbb{R}^2)$.
\begin{lemma}\label{lem:2.5} $(\mathrm{The\; Picard\; contraction\; principle})$\;
  Let
  $({X}\times{Y},\; \|\cdot\!\|_{X} \!+\! \|\cdot\!\|_{Y})$  be an abstract Banach product space,
  $L: {X}\rightarrow{Y}$ and
  $B:{X}\times{Y}\rightarrow{X}$ are a linear operator and a bilinear operator, respectively, such that for any 
  $(u,v)\in{X}\times{Y}$, there exist positive constant $c$ and if
 \begin{align}\label{eq:2.8}
  \|L(u)\|_{Y}\le c\|u\|_{X},\;\;\|B(u,v)\|_{X}\le{c}\|u\|_{X}\|v\|_{Y},
 \end{align}
  then for any $(e^{t\Delta}u_{0},e^{t(\Delta-1)}v_{0})\in{X\times{Y}}$ with
  $\|(e^{t\Delta}u_{0},\frac{1}{4c}e^{t(\Delta-1)}v_0)\|_{X\times{Y}}<\frac{3}{32c^2}$, the following system
 \begin{equation}\label{eq:2.9}
     (u, v)=(e^{t\Delta}u_{0},e^{t(\Delta-1)}v_{0}) + (B(u,v),\ L(u))
 \end{equation}
 has a solution $(u,v)$ in $ {X}\times {Y}$. In particular, the solution is such that
  $\|(u,\frac{v}{4c})\|_{ {X}\times {Y}}\le{2}\|(e^{t\Delta}u_{0},\frac{1}{4c}e^{t(\Delta-1)}v_0)\|_{ {X\times
  {Y}}}$
 and it is the only one such that $\|(u,\frac{v}{4c})\|_{ {X}\times {Y}}<\frac{3}{16c^2}.$
\end{lemma}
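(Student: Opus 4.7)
The plan is to realize $(u,v)$ as the unique fixed point of the map
$$\Phi(u,v):=\bigl(e^{t\Delta}u_0+B(u,v),\ e^{t(\Delta-1)}v_0+L(u)\bigr)$$
on the product Banach space $X\times Y$ endowed with the weighted norm $N(u,v):=\|u\|_X+\tfrac{1}{4c}\|v\|_Y$. Write $\alpha:=N(e^{t\Delta}u_0,e^{t(\Delta-1)}v_0)<\tfrac{3}{32c^2}$ and let $\mathcal{B}_R:=\{(u,v)\in X\times Y:N(u,v)\le R\}$. The strategy is to show that $\Phi$ is a strict contraction of $\mathcal{B}_{2\alpha}$ into itself and then invoke the Banach fixed-point theorem.

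For the self-map property I would combine the two bounds in \eqref{eq:2.8} with the triangle inequality,
$$N(\Phi(u,v))\le\alpha+\|B(u,v)\|_X+\tfrac{1}{4c}\|L(u)\|_Y\le\alpha+4c^2\,\|u\|_X\cdot\tfrac{\|v\|_Y}{4c}+\tfrac{1}{4}\|u\|_X,$$
and then estimate the cross product by AM--GM as $\|u\|_X\cdot\tfrac{\|v\|_Y}{4c}\le\tfrac{1}{4}N(u,v)^2$. For $(u,v)\in\mathcal{B}_{2\alpha}$ this yields $N(\Phi(u,v))\le\alpha+4c^2\alpha^2+\tfrac{\alpha}{2}=\tfrac{3\alpha}{2}+4c^2\alpha^2$, which is bounded by $2\alpha$ since $\alpha<\tfrac{3}{32c^2}<\tfrac{1}{8c^2}$.

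For the contraction step I would exploit the bilinearity of $B$, writing $B(u_1,v_1)-B(u_2,v_2)=B(u_1-u_2,v_1)+B(u_2,v_1-v_2)$, together with the linearity of $L$, to obtain, for every $(u_i,v_i)\in\mathcal{B}_R$,
$$N\bigl(\Phi(u_1,v_1)-\Phi(u_2,v_2)\bigr)\le\bigl(4c^2R+\tfrac{1}{4}\bigr)\,N(u_1-u_2,v_1-v_2).$$
Taking $R=2\alpha$, the contraction factor $8c^2\alpha+\tfrac{1}{4}$ is strictly less than $1$ precisely under the hypothesis $\alpha<\tfrac{3}{32c^2}$. The Banach fixed-point theorem then produces a unique fixed point $(u,v)\in\mathcal{B}_{2\alpha}$ satisfying $N(u,v)\le 2\alpha$, which is the claimed bound \eqref{eq:2.9}. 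The broader uniqueness statement within $\mathcal{B}_{3/(16c^2)}$ follows from the same contraction estimate: for any $R<\tfrac{3}{16c^2}$ the factor $4c^2R+\tfrac{1}{4}$ remains strictly below $1$, forcing any two solutions in that ball to coincide.

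The main (mild) subtlety is calibrating the constants so that the single smallness threshold $\tfrac{3}{32c^2}$ simultaneously governs both the stability of $\mathcal{B}_{2\alpha}$ under $\Phi$ and the contraction factor; the AM--GM step above is what makes the stability condition weaker than the contraction condition, so the latter is the binding constraint. Apart from this constant-tracking issue, the argument is a routine application of the Picard scheme.
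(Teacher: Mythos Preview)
Your proof is correct and follows essentially the same route as the paper: the paper performs the change of variable $w=v/(4c)$ and works in the ordinary product norm $\|u\|_X+\|w\|_Y$, which is exactly your weighted norm $N(u,v)$; the paper's self-map bound $A_0+c^2\|(u,w)\|^2+\tfrac14\|(u,w)\|$ encodes the same AM--GM step, and its contraction factor $8c^2A_0+\tfrac14$ matches your $8c^2\alpha+\tfrac14$. Your explicit verification of uniqueness in the larger ball $\mathcal B_{3/(16c^2)}$ is a welcome addition, since the paper states this but leaves it implicit.
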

\begin{proof}
 The proof is standard now. However, for reader's convenience, we give a brief proof.
 We first define a mapping ${\Phi}: X\times{Y}\rightarrow X\times
 Y$ such that
 \begin{align}
 \label{eq:2.11}
 \Phi(u,v)=(e^{t\Delta}u_{0},e^{t(\Delta-1)}v_{0}) + (B(u,v),\ L(u)).
 \end{align}
 Applying simple transformations, i.e. $w=\frac{v}{4c}$ and $w_0=\frac{1}{4c}v_0$ to \eqref{eq:2.11}, we get
 \begin{align}\label{eq:2.12}
 \Phi(u,w)=(e^{t\Delta}u_{0},e^{t(\Delta-1)}w_0) + (4cB(u,w),\frac{1}{4c}L(u)).\end{align}
 By applying \eqref{eq:2.8} to \eqref{eq:2.12}, we have
 \begin{align}\label{eq:2.13}
  \|\Phi(u,w)\|_{X\times{Y}}
  &\le\|(e^{t\Delta}u_{0},e^{t(\Delta-1)}w_0)\|_{X\times Y}+4c^2\|u\|_X\|w\|_Y+\frac{1}{4}\|u\|_Y\nonumber\\
  &\le {A_0}+ c^2\|(u,w)\|_{X\times{Y}}^2+\frac{1}{4}\|(u,w)\|_{X\times{Y}},
 \end{align}
 where $A_0:=\|(e^{t\Delta}u_{0},e^{t(\Delta-1)}w_0)\|_{X\times Y}$.
  Let $\overline{B(0,2A_0)}\subset{X\times Y}$ be a closed ball centered at origin
 with radius $2A_0$. From \eqref{eq:2.13}, we observe that
 $\Phi$ is well defined in $\overline{B(0,2A_0)}$ and maps
 $\overline{B(0,2A_0)}$ into itself. Moreover, for any $(u_1,w_1)$,
 $(u_2,w_2)\in\overline{B(0,2A_0)}$, by making use of \eqref{eq:2.8}, we
 get
 \begin{align}\label{eq:2.14}
 &\|\Phi(u_1,w_1\!)-\!\Phi(u_2,w_2)\|_{X\times Y}\!=\!\|(4cB(u_1,w_1\!)\!-\!4cB(u_2,w_2),\, \frac{L(u_1\!)\!-\!L(u_2)}{4c})
 \|_{X\times{Y}}\nonumber\\
 &\le4c^2\max\{\|u_2\|_X,\|w_1\|_Y\}\|(u_1\!-u_2,w_1\!-w_2)\|_{X\times{Y}}+\frac{1}{4}\|(u_1\!-u_2,w_1\!-w_2)\|_{X\times{Y}}\nonumber\\
 &\le8c^2A_0\|(u_1-u_2,w_1-w_2)\|_{X\times{Y}}\!+\!\frac{1}{4}\|(u_1-u_2,w_1-w_2)\|_{X\times{Y}}\nonumber\\
 &\le (8c^2A_0+\frac{1}{4})\|(u_1-u_2,w_1-w_2)\|_{X\times{Y}},
 \end{align}
 where $8c^2A_0+\frac{1}{4}<1$ since $A_0<\frac{3}{32c^2}$.
 From \eqref{eq:2.14}, we observe that $\Phi: (u,w)\mapsto \Phi(u,w)$
 in \eqref{eq:2.12} is contractive. Thus there exists a unique
 solution $(u,w)$ to \eqref{eq:2.12}, which shows that
 \eqref{eq:2.11} also has a unique solution $(u,v)$ to \eqref{eq:2.11}
 provides that $\|(e^{t\Delta}u_{0},\frac{1}{4c}e^{t(\Delta-1)}v_0)\|_{X\times
 Y}<{3}/{32c^2}$.
\end{proof}
\section{Proof of Theorem \ref{thm:1.1}}
 As usual, we apply the heat semigroup $e^{t\Delta}$ with heat kernel $\frac{1}{4\pi t}e^{-\frac{|x|^2}{4t}}$ to invert
 system \eqref{k-s-1}--\eqref{k-s-3} into the following integral equations via the Duhamel principle:
\begin{align}\label{eq3-1}
  \hskip-1.051cm \left\{\begin{aligned}{u}&=e^{t\Delta}{u}_0-\int_{0}^te^{(t-\tau)\Delta}\nabla\cdot({u}\nabla{v})d\tau:=e^{t\Delta}{u}_0-B(u,v),\\
     v&=e^{t(\Delta-1)}v_0+\int_{0}^te^{(t-\tau)(\Delta-1)}{u}d\tau:=e^{t(\Delta-1)}v_0+L(u).
    \end{aligned}\right.
   \end{align}
 Let $c$ be the largest positive constant that appears in the
 linear and bilinear estimates and depends only on dimension.
 By denote $\frac{v}{4c}$ by $w$, we get the following system
\begin{align}\label{eq3-2'}
   \left\{\begin{aligned}
   {u}&=e^{t\Delta}{u}_0-4c\int_{0}^te^{(t-\tau)\Delta}\nabla\cdot({u}\nabla{w})d\tau:=e^{t\Delta}{u}_0-4cB(u,w),\\
    w &=e^{t(\Delta-1)}w_0+\frac{1}{4c}\int_{0}^te^{(t-\tau)(\Delta-1)}{u}d\tau:=e^{t(\Delta-1)}w_0+\frac{1}{4c}L(u),
   \end{aligned}\right.
   \end{align}
 where we regard equations \eqref{eq3-2'} as a fixed point system and let mapping
 $\Phi$ be
 \begin{align}\Phi:
    (u,w)\mapsto
    \Big(e^{t\Delta}u_0,\,e^{t(\Delta-1)}{w_0}\Big)+\Big(-4cB(u,w),\,\frac{1}{4c}L(u)\Big).\label{eq3-3}\quad\quad
 \end{align}
 We call solution $(u,4cw)$ to \eqref{eq3-1} {\it
 mild solution} of \eqref{k-s-1}--\eqref{k-s-3} if $(u,w)$ solves \eqref{eq3-2'}.

\subsection{Proof of Theorem \ref{thm:1.1}}
 In this subsection, we prove global well-posedness of system \eqref{eq3-2'} with initial data
  $(u_0,w_0)\in L^1(\mathbb{R}^2)\times L^\infty(\mathbb{R}^2)$ by making
 use of the Kato's $L^p$-framework. At first, we
 set
 \begin{align}\label{eq:3-4}
   \begin{aligned}
    &X = \{u\in \mathcal{S}'(\mathbb{R}^2\times (0,\infty)); \ \sup_{t>0} \|u(\cdot,t)\|_{L^1} + \sup_{t>0}\; t\|u(\cdot,t)\|_{L^\infty}
           < \infty\;\},\\
    &Y= \{w\in \mathcal{S}'(\mathbb{R}^2\times (0,\infty)); \ 
    \sup_{t>0} t^{\frac{1}{2}}\|\nabla{w}(\cdot,t)\|_{L^\infty}
          \!<\! \infty\}.
   \end{aligned}
  \end{align}
 Then we prove that for suitably small initial data $(u_0,w_0)$ the
 mapping $\Phi$ is contractive and maps a closed ball of $X \times {Y}$ into itself.
\begin{proposition}\label{pro:3.1}
  For any initial data $({u}_0,w_0)\in{L}^1\times{L}^\infty$, there exists positive constant $c$ such that
  \begin{align}\label{eq:3-5}
  \left\{\begin{aligned}&\|(e^{t\Delta}u_0,e^{t(\Delta-1)}w_0)\|_{X\times{Y}}\le
   c\hskip.02cm\|(u_0,w_0)\|_{L^1\times \dot{B}^0_{\infty,\infty}}\!\!\le c\hskip.02cm\|(u_0,w_0)\|_{L^1\times L^\infty},\\
  &\|4cB(u,w)\|_X\le 4c^2\|u\|_X\|w\|_Y\;\;\;\text{ and }\;\;\;
  \|\frac{1}{4c}L(u))\|_{{Y}}\le\frac{1}{4}\|u\|_X.\end{aligned}\right.
 \end{align}
 \end{proposition}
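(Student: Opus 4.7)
The proof of Proposition~\ref{pro:3.1} rests on classical $L^p$ estimates for the two-dimensional heat kernel $K_t(x)=\frac{1}{4\pi t}e^{-|x|^2/(4t)}$, namely $\|K_t\|_{L^p}\lesssim t^{-1+1/p}$ and $\|\nabla K_t\|_{L^p}\lesssim t^{-3/2+1/p}$ for $1\le p\le\infty$, combined with Young's convolution inequality and a dyadic splitting of the Duhamel integral at $\tau=t/2$.

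\textbf{Linear estimates.} For $e^{t\Delta}u_0$ in $X$, mass conservation $\|K_t\|_{L^1}=1$ gives $\|e^{t\Delta}u_0\|_{L^1}\le\|u_0\|_{L^1}$, while Young with $\|K_t\|_{L^\infty}\lesssim t^{-1}$ yields $t\|e^{t\Delta}u_0\|_{L^\infty}\lesssim\|u_0\|_{L^1}$. For $e^{t(\Delta-1)}w_0=e^{-t}e^{t\Delta}w_0$ in $Y$, I factor $\nabla e^{t\Delta}=(\nabla e^{(t/2)\Delta})\circ(e^{(t/2)\Delta})$, apply $\|\nabla e^{s\Delta}\|_{L^\infty\to L^\infty}\lesssim s^{-1/2}$ in the outer factor, and invoke Proposition~\ref{def:2.4} to rewrite $\sup_{s>0}\|e^{s\Delta}w_0\|_{L^\infty}\sim\|w_0\|_{\dot B^0_{\infty,\infty}}\le\|w_0\|_{L^\infty}$ for the inner factor; the harmless $e^{-t}$ only improves matters.

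\textbf{Bilinear estimate for $B(u,w)$.} For the $L^1$ component I move the gradient onto the kernel and use $\|\nabla e^{(t-\tau)\Delta}\|_{L^1\to L^1}\lesssim(t-\tau)^{-1/2}$ together with $\|u\nabla w\|_{L^1}\le\|u\|_{L^1}\|\nabla w\|_{L^\infty}\le\tau^{-1/2}\|u\|_X\|w\|_Y$; the Beta-type integral $\int_0^t(t-\tau)^{-1/2}\tau^{-1/2}\,d\tau=\pi$ closes the bound. For the $L^\infty$ component I split $\int_0^t=\int_0^{t/2}+\int_{t/2}^t$: on $(0,t/2)$ pair $\|u\nabla w\|_{L^1}\lesssim\tau^{-1/2}\|u\|_X\|w\|_Y$ with $\|\nabla e^{(t-\tau)\Delta}\|_{L^1\to L^\infty}\lesssim(t-\tau)^{-3/2}\sim t^{-3/2}$; on $(t/2,t)$ pair $\|u\nabla w\|_{L^\infty}\le\|u\|_{L^\infty}\|\nabla w\|_{L^\infty}\lesssim t^{-3/2}\|u\|_X\|w\|_Y$ with $\|\nabla e^{(t-\tau)\Delta}\|_{L^\infty\to L^\infty}\lesssim(t-\tau)^{-1/2}$. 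Both halves integrate to the desired weight $t^{-1}\|u\|_X\|w\|_Y$, which, multiplied by $t$, gives exactly the $L^\infty$ piece of $\|B(u,w)\|_X$.

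\textbf{Linear estimate for $L(u)$ and main obstacle.} The same dyadic split handles $\nabla L(u)=\int_0^t e^{-(t-\tau)}\nabla e^{(t-\tau)\Delta}u\,d\tau$: on $(0,t/2)$ apply $\|u\|_{L^1}\le\|u\|_X$ with the $L^1\to L^\infty$ bound $(t-\tau)^{-3/2}$, while on $(t/2,t)$ apply $\|u\|_{L^\infty}\le\tau^{-1}\|u\|_X$ with the $L^\infty\to L^\infty$ gradient bound $(t-\tau)^{-1/2}$. Each piece yields $t^{-1/2}\|u\|_X$ near $t=0$, while the factor $e^{-(t-\tau)}$ forces integrability as $t\to\infty$; the normalization $\tfrac{1}{4c}$ in front of $L(u)$ then converts the raw bound $\|L(u)\|_Y\lesssim c\|u\|_X$ into the $\tfrac14$ constant asserted in \eqref{eq:3-5}. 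The single delicate point is the $L^\infty$ half of the bilinear bound: the split must be arranged so that after differentiating the kernel and distributing $u$ versus $\nabla w$ between $L^1$ and $L^\infty$, the singularities at $\tau=0$ and $\tau=t$ both remain integrable while the output decays at exactly the scale-critical rate $t^{-1}$ dictated by the norm of $X$.
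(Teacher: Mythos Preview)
Your proof is correct and follows essentially the same route as the paper: heat-kernel $L^p$ bounds, Young's inequality, and the dyadic split at $\tau=t/2$ for both $B(u,w)$ and $L(u)$. Two small remarks: (i) for the $Y$-bound on $e^{t(\Delta-1)}w_0$, Proposition~\ref{def:2.4} is stated only for $s<0$, so your invocation at $s=0$ is formally outside its scope---the paper instead applies Proposition~\ref{def:2.4} at $s=-1$ to identify $\sup_{t>0}t^{1/2}\|e^{t\Delta}\nabla w_0\|_{L^\infty}=\|\nabla w_0\|_{\dot B^{-1}_{\infty,\infty}}$ and then uses the lifting isomorphism $\dot B^{-1}_{\infty,\infty}\cong\dot B^{0}_{\infty,\infty}$; (ii) the factor $e^{-(t-\tau)}$ is not actually needed for $\|L(u)\|_Y$---the paper simply discards it, since the split integrals $t^{1/2}\!\int_0^{t/2}(t-\tau)^{-3/2}d\tau$ and $t^{1/2}\!\int_{t/2}^t(t-\tau)^{-1/2}\tau^{-1}d\tau$ are already bounded uniformly in $t$.
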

 \begin{proof}
   We divide the whole proof into two parts concerning with $e^{t\Delta}u_0$,
   $e^{-t}e^{t\Delta}w_0$ and
   $B(u,w),$ $L(u)$, respectively.

\noindent   \textsc{Part I.}\; {\it Estimates for
 $\|e^{t\Delta}u_0\|_X$ and $\|e^{-t}e^{t\Delta}w_0\|_Y$.}
 Recall that the heat kernel is
 $\frac{1}{4\pi{t}}e^{-\frac{|x|^2}{4t}}$. Then for any $t>0$ and $1\le
 p\le\infty$, there hold
   \begin{align}\label{eq:3-6}
   \|\frac{1}{4\pi{t}}e^{-\frac{\,\,\,|\cdot|^2}{4t}}\|_{L^p}\le {t}^{-1+\frac{1}{p}}\quad\text{and }\
   \|\frac{1}{4\pi{t}}\nabla e^{-\frac{\,\,|\cdot|^2}{4t}}\|_{L^p}\le t^{-\frac{3}{2}+\frac{1}{p}}.
   \end{align}
   Applying Young's inequality and \eqref{eq:3-6} to
   $e^{t\Delta}u_0(x)\!=\!\int_{\!\mathbb{R}^2}\!\frac{1}{4\pi
   t}e^{-\frac{|y|^2}{4t}}u_0(x\!-\!y)dy$,
   we get
   \begin{align}\label{eq:3-7}
    \|e^{t\Delta}u_0\|_X&=\sup_{t>0}\|e^{t\Delta}u_0\|_{L^1}+\sup_{t>0}\,t\hskip.01cm\|e^{t\Delta}u_0\|_{L^{\!\infty}}\nonumber\\
                        &\le \sup_{t>0}\|\frac{1}{4\pi t}e^{-\frac{|\cdot|^2}{4t}}\|_{L^1}\|u_0\|_{L^1}+
                        \sup_{t>0}t\|\frac{1}{4\pi t}e^{-\frac{|\cdot|^2}{4t}}\|_{L^\infty} \|u_0\|_{L^1}\nonumber\\
                        &\le2\|u_0\|_{L^1}
   \end{align}
   or from Proposition \ref{def:2.4} and embedding theorem $L^1\hookrightarrow\dot{B}^{-2}_{\infty,\infty}$, we have
   $$\sup_{t>0}\,t\hskip.01cm\|e^{t\Delta}u_0\|_{L^{\!\infty}}=\|u_0\|_{\dot{B}^{-2}_{\infty,\infty}}\le{c_1}\|u_0\|_{L^1}.$$
   We emphasize here that for any $(s,\alpha,p,q)\in\mathbb{R}^2\times[1,\infty]^2$, $(-\Delta)^{\frac{\alpha}{2}}$ maps $\dot{B}^{s}_{p,q}$ isomorphically
   onto $\dot{B}^{s-\alpha}_{p,\,q}$ (cf. \cite{Triebel:1983}, Theorem 1,
   p.242), which is a direct consequence of the well known
   Bernstein's inequalities. Thus following similar arguments of $e^{t\Delta}$ and using \eqref{eq:3-6} as well as Proposition \ref{def:2.4}, we get
   \begin{align}
   \|e^{-t}e^{t\Delta}w_0\|_Y&=\sup_{t>0}\,t^{\frac{1}{2}}\hskip.01cm
   \|e^{-t}e^{t\Delta}\nabla w_0\|_{L^{\!\infty}}\le\sup_{t>0}\,t^{\frac{1}{2}}\hskip.01cm\|e^{t\Delta}\nabla{w}_0\|_{L^{\!\infty}}\nonumber\\
   & =\|\nabla w_0\|_{\dot{B}^{-1}_{\infty,\infty}}\sim\|w_0\|_{\dot{B}^{0}_{\infty,\infty}}\nonumber\\
   &  \le c_1\|w_0\|_{L^\infty},\label{eq:3-8}
    \end{align}
  where the fourth and the fifth inequalities follow from Theorem 1 of
  \cite{Triebel:1983} p.242 and boundedness of Riesz transforms $\frac{\nabla}{\sqrt{-\Delta}}$
  as well as $L^\infty\!\hookrightarrow\!\dot{B}^{0}_{\infty,\infty}$.

 \noindent  \textsc{Part II.}\; {\it Estimates for $\|B(u,w)\|_X$ and $L(u)\|_{Y}$.}
 As for $\|B(u,w)\|_X$, we have
 \begin{align}
    \|B(u,w)\|_X&=\sup_{t>0}\|\!\!\int_0^t\!\! e^{(t-\tau)\Delta} \nabla\cdot(u\nabla w) d\tau \|_{L^1}
                 +\sup_{t>0}\,t\|\!\!\int_0^t\!\! e^{(t-\tau)\Delta} \nabla\cdot(u\nabla w) d\tau \|_{L^\infty}
                  \nonumber\\
      &\le      \sup_{t>0}\int_0^t(t-\tau)^{-\frac{1}{2}}\|u\nabla{w}\|_{L^1}d\tau
           +    \sup_{t>0}t\!\!\int_0^{\frac{t}{2}}(t-\tau)^{-\frac{3}{2}}\|u\nabla{w}\|_{L^1}d\tau\nonumber\\
      &\,\;\; + \sup_{t>0}t\!\!\int_{\frac{t}{2}}^t(t-\tau)^{-\frac{1}{2}}\|u\nabla{w}\|_{L^\infty}d\tau\nonumber\\
      &\le\sup_{t>0}\Big(\int_0^t(t-\tau)^{-\frac{1}{2}}\tau^{-\frac{1}{2}}d\tau+c_2\frac{1}{\!\!\sqrt{t}}\!\int_0^{\frac{t}{2}}\tau^{-\frac{1}{2}}d\tau\Big)\sup_{\tau>0}\tau^{\frac{1}{2}}\|(u\nabla{w})(\tau)\|_{L^1}\nonumber\\
      &\,\;\; + \sup_{t>0}t\!\!\int_{\frac{t}{2}}^t(t-\tau)^{-\frac{1}{2}}\tau^{-\frac{3}{2}}d\tau\sup_{\tau>0}\tau^{\frac{3}{2}}\|(u\nabla{w})(\tau)\|_{L^\infty}\nonumber\\
      &\le c_2\sup_{\tau>0}\,(\|u(\tau)\|_{L^1}+\tau\|u(\tau)\|_{L^\infty})\sup_{\tau>0}\tau^{\frac{1}{2}}\|\nabla{w}(\tau)\|_{L^\infty} \nonumber\\
      &\le {c_2}\|u\|_{X}\|w\|_{Y}.
      \label{eq:3-9}
   \end{align}
 As for $\|L(u)\|_Y$, from definition of $\|\cdot\|_{Y}$, we need to
 estimate
  \begin{align}
      &\sup_{t>0}{t}^{\frac{1}{2}}\|\nabla{L}(u)\|_{L^\infty}  =  \sup_{t>0}t^\frac{1}{2}\|\int_0^te^{-(t-\tau)}e^{(t-\tau)\Delta}\nabla{u}d\tau\|_{L^\infty}\nonumber\\
                             &\le\sup_{t>0}\Big(t^\frac{1}{2}\!\!\int_0^{\frac{t}{2}}(t-\tau)^{-\frac{3}{2}}\|{u}(\tau)\|_{L^1}d\tau +
                                            t^\frac{1}{2}\!\!\int_{\frac{t}{2}}^t(t-\tau)^{-\frac{1}{2}}\tau^{-1}\tau\|{u}(\tau)\|_{L^\infty}d\tau\Big)\nonumber\\
                             &\le{c_3}\sup_{\tau>0}\hskip.02cm(\|u(\tau)\|_{L^1}+\tau\|u(\tau)\|_{L^\infty})\nonumber\\
                             &\le{c}_3\|u\|_X.\label{eq:3-10}
  \end{align}
  Setting $c\!=\max\{c_1,c_2,c_3\}$, combining \eqref{eq:3-4}--\eqref{eq:3-10}, multiplying $B(u,w)$ by $4c$ and
  multiplying $L(u)$ by $\frac{1}{4c}$, we prove
   \eqref{eq:3-5}.
 \end{proof}
 \smallskip
 \textbf{{Proof of Theorem \ref{thm:1.1}}}:\hskip.3cm
 At first, applying Proposition \ref{pro:3.1}, following similar arguments as in the proof of
 Lemma \ref{lem:2.5}, we prove that there exists a unique solution $(u,w)\in \overline{B(0,2A_{10})}\subset{X}\times{Y}$ to system \eqref{eq3-2'} if
 $A_{10}:=\|(e^{t\Delta}u_0,e^{t(\Delta-1)}w_0)\|_{X\times{Y}}\!<\!\!{3}/{32c^2}$.
 Moreover, this solution also satisfies $\Phi(u,w)=(u,w)$. From \eqref{eq:3-7}--\eqref{eq:3-8}, it suffices to
 assume that $\|(u_0,w_0)\|_{L^1\times
 \dot{B}^0_{\infty,\infty}}\!\!<\!{3}/{32c^3}$ since $A_{10}\le c\|(u_0,w_0)\|_{L^1\times\dot{B}^0_{\infty,\infty}}\!\!\!<\!{3}/{32c^2}$.

 Next we show that $w\in
 {C}_w([0,\infty);L^\infty(\mathbb{R}^2))$. From \eqref{eq3-2'} and \eqref{eq:3-6}, we
 have
 \begin{align*}
    \sup_{t>0}\|w\|_{L^\infty}
  &=\sup_{t>0}\|e^{t(\Delta-1)}w_0+\frac{1}{4c}L(u)\|_{L^\infty}\le\|w_0\|_{L^\infty}+\frac{1}{4}\|u\|_X,
 \end{align*}
 where in \eqref{eq:3-10},
 $c_3=\sup_{t>0}(t^{\frac{1}{2}}\!\int_0^{\frac{t}{2}}(t-\tau)^{-\frac{3}{2}}d\tau+t^{\frac{1}{2}}\!\int_{t/2}^{t}(t-\tau)^{-\frac{1}{2}}\tau^{-1}d\tau)$
 and similarly
 \begin{align*}
      \|\frac{1}{4c}L(u)\|_{L^\infty}
  & = \frac{1}{4c}\|\int_0^te^{(t-\tau)(\Delta-1)}ud\tau\|_{L^\infty}\le\frac{1}{4c}\|\int_0^te^{(t-\tau)\Delta}ud\tau\|_{L^\infty}\\
  &\le\frac{1}{4c}\int_0^{\frac{t}{2}}\!(t-\!\tau)^{-\frac{2}{2}(\frac{1}{1}-\frac{1}{\infty})}\|u(\tau)\|_{L^\infty}d\tau+\frac{1}{4c}\int_{\frac{t}{2}}^t
  \tau^{-1}\tau\|u(\tau)\|_{L^\infty}d\tau\nonumber\\
  &\le \frac{c_3}{4c}\|u\|_X\le \frac{1}{4}\|u\|_X
 \end{align*}
 since $\int_0^{\frac{t}{2}}(t-\tau)^{-1}d\tau<
 t^{\frac{1}{2}}\!\int_0^{\frac{t}{2}}(t-\tau)^{-\frac{3}{2}}d\tau$,
 $\int_{\frac{t}{2}}^t\tau^{-1}d\tau<t^{\frac{1}{2}}\!\int_{\frac{t}{2}}^{t}(t-\tau)^{-\frac{1}{2}}\tau^{-1}d\tau$ and $c\ge{c}_3$.
 Moreover, following a dense argument in $L^1(\mathbb{R}^2)$ we can prove the time continuity of $u$.
 Since Schwartz function space is not dense in
 $L^\infty(\mathbb{R}^2)$, we can only obtain the weakly star
 time continuity of solution $w$.

 Finally, performing transformation: $(u,v)=(u,4cw)$, we get the unique solution $(u,v)$ of \eqref{k-s-1}--\eqref{k-s-3}.

\subsection{Proof of Theorem \ref{thm:1.2}}
 In this subsection, we prove global well-posedness of system \eqref{eq3-2'} with initial data
  $(u_0,w_0)\in H^1_b(\mathbb{R}^2)\times H^1(\mathbb{R}^2)$ by making
 use of the Kato's framework, see \cite{Kato:1984471} for instance. At first,
 we recall that $\sigma(t)=t^{\frac{1}{2}}(1+t)^{-\frac{1}{2}}$ and then we set
 \begin{align}\label{eq:3-12}
   \begin{aligned}
    &X =\! \{u\in \mathcal{S}'(\mathbb{R}^2\!\times\! (0,\infty)); \ \sup_{t>0} \|u(\cdot,t)\|_{H^1} + \|\nabla{u}\|_{L^2_tH^1} \!+
    \|u\|_{L^\infty_tL^\infty}   < \infty\;\},\\
    &Y=\! \{w\in\! \mathcal{S}'(\mathbb{R}^2\!\times\!(0,\infty)); \ \sup_{t>0} \|w(\cdot,t)\|_{H^1}\! + \!\|\nabla{w}\|_{L^2_tH^1}\! +
    \|{\sigma\nabla}{w}\|_{L^{\!\infty}_t\!L^{\!\infty}}\!<\! \infty\}.
   \end{aligned}
  \end{align}

 The following Proposition will play a central role in proving
 Theorem \ref{thm:1.2}.
\begin{proposition}\label{pro:3.2}
  For any initial data $({u}_0,w_0)\in H^1_b(\mathbb{R}^2)\times H^1(\mathbb{R}^2)$, there exists positive constant $c$ such that
  \begin{align}\label{eq:3-13}
   &\|4cB(u,w)\|_{X}\le{4c^2}\|u\|_{X}\|w\|_{Y}\le{c^2}\|(u,w)\|_{X\times{Y}}^2,\quad \|\frac{1}{4c}L(u)\|_Y\le \frac{1}{4}\|u\|_X
  \end{align}
   and\; $\|(e^{t\Delta}u_0,e^{t(\Delta-1)}w_0)\|_{X\times
  Y}\le{c}\|(u_0,v_0)\|_{H^1_b\times{H}^1}$.
 \end{proposition}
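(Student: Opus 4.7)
\; The proposition bundles three inequalities of quite different flavor, so I would prove each block separately: (a) the linear heat-evolution estimate for $(e^{t\Delta}u_0,e^{t(\Delta-1)}w_0)$, (b) the bilinear estimate on $B(u,w)$, and (c) the linear estimate on $L(u)$. Throughout I would reduce each of the six component norms in $X\times Y$ to one of three tools already in hand: the Fourier-multiplier estimate (Lemma \ref{lem:2.1}), the bilinear smoothing/maximal-regularity estimates (Lemmas \ref{lem:2-2} and \ref{lem:2.3}), and the heat-kernel $L^p$-Young bounds \eqref{eq:3-6}. The unusual weight $\sigma(t)=t^{1/2}(1+t)^{-1/2}$ is what will tie the short-time smoothing of the heat flow to the $e^{-t}$ damping for large $t$.

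\textbf{Block (a): linear initial-data estimates.}\; The bounds $\sup_t\|e^{t\Delta}u_0\|_{H^1}\le\|u_0\|_{H^1}$ and $\sup_t\|e^{t(\Delta-1)}w_0\|_{H^1}\le\|w_0\|_{H^1}$ are immediate from Plancherel since the multipliers $e^{-t|\xi|^2}$ and $e^{-t(1+|\xi|^2)}$ are bounded by $1$. For $\|\nabla e^{t\Delta}u_0\|_{L^2_tH^1}$ I apply \eqref{eq:2.2} with $m_\delta(t,\xi)=|\xi|e^{-t|\xi|^2}$ and $\delta=1$, noting that $\|m_\delta\|_{L^\infty_\xi L^2_t}=\sup_\xi |\xi|\,(2|\xi|^2)^{-1/2}$ is bounded; the analogue for $\nabla e^{t(\Delta-1)}w_0$ gets an even better bound thanks to the exponential factor. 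The pointwise estimate $\|e^{t\Delta}u_0\|_{L^\infty}\le\|u_0\|_{L^\infty}$ handles $\|e^{t\Delta}u_0\|_{L^\infty_tL^\infty}$. The only subtle piece is $\|\sigma\nabla e^{t(\Delta-1)}w_0\|_{L^\infty_t L^\infty}$: here I use $\|\nabla e^{t\Delta}w_0\|_{L^\infty}\le\|K_t\|_{L^2}\|\nabla w_0\|_{L^2}\lesssim t^{-1/2}\|w_0\|_{H^1}$, multiply by $\sigma(t)\,e^{-t}$ and observe that $\sigma(t)\,t^{-1/2}e^{-t}=(1+t)^{-1/2}e^{-t}\lesssim 1$ uniformly in $t>0$.

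\textbf{Block (b): the bilinear term $B(u,w)$.}\; I factor $\nabla\cdot(u\nabla w)=\Lambda R\cdot(u\nabla w)$ with $R$ the Riesz transform (bounded on every $H^s$). For the $H^1$ supremum and the $L^2_tH^1$ norm of $\nabla B$ I apply \eqref{eq:2.6} of Lemma \ref{lem:2.3} with the appropriate $(p,r)$ pairs, together with Lemma \ref{lem:2-2}, which reduces everything to estimating $\|u\nabla w\|_{L^2_tH^s}$. The product is controlled via 2D Sobolev/Gagliardo–Nirenberg: the factors $u\partial_i w$ and $\partial_j u\cdot\partial_iw$, $u\partial_i\partial_jw$ are handled with the $X$-norms $\|u\|_{L^\infty_tL^\infty}$, $\|\nabla u\|_{L^2_tH^1}$ and the $Y$-norms $\|\nabla w\|_{L^2_tH^1}$, $\|\sigma\nabla w\|_{L^\infty_tL^\infty}$, using $\|\nabla u\|_{L^4}^2\lesssim\|\nabla u\|_{L^2}\|\nabla^2 u\|_{L^2}$ in 2D. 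The delicate norm is $\|B\|_{L^\infty_tL^\infty}$: I split $\int_0^t=\int_0^{t/2}+\int_{t/2}^t$ and choose different $L^p\to L^\infty$ smoothing estimates $\|\nabla e^{s\Delta}\|_{L^p\to L^\infty}\lesssim s^{-1+1/p-1/2}$ on the two pieces, paying for the $L^\infty$ loss in 2D by spending a bit of higher integrability via $\|\nabla w\|_{L^q}\lesssim\|\nabla w\|_{H^1}$ for some $q\in(2,\infty)$, then using $\|\nabla w\|_{L^\infty}\le\sigma(\tau)^{-1}\|w\|_Y$ on the piece where it gives an integrable singularity.

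\textbf{Block (c): the linear term $L(u)$ and the obstacle.}\; For $L(u)=\int_0^t e^{-(t-\tau)}e^{(t-\tau)\Delta}u\,d\tau$ the $H^1$ and $L^2_tH^1$-type norms follow from Lemma \ref{lem:2.1} because the symbol has an $e^{-(t-\tau)}$ factor giving integrable-in-time multipliers. For $\|\sigma\nabla L(u)\|_{L^\infty_tL^\infty}$ I estimate $\|e^{s\Delta}\nabla u\|_{L^\infty}\lesssim s^{-1/2}\|\nabla u\|_{L^2}\le s^{-1/2}\|u\|_X$ and integrate against $e^{-(t-\tau)}$, which is integrable in $(t-\tau)^{-1/2}$ on $(0,\infty)$; multiplying by $\sigma(t)\le 1$ gives the bound with constant $\frac14$ after absorbing the dimensional constants into $c$. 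The main obstacle I anticipate is the $L^\infty_tL^\infty$ component of block (b): the 2D failure of $H^1\hookrightarrow L^\infty$ and the fact that the naive weight $\sigma(\tau)^{-1}$ alone would produce a $t^{1/2}$-growing integral against $(t-\tau)^{-1/2}$. The whole point of the balanced weight $\sigma(t)=t^{1/2}(1+t)^{-1/2}$ is to arrange that the short-time singularity ($\tau^{-1/2}$ behavior) and the large-time behavior cooperate so that the time convolution produces a bounded quantity; getting this single estimate to close is where all of the paper's novelty lies, and it is what dictates the exact form of the $Y$-norm in \eqref{eq:3-12}.
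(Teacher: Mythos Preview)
Your blocks (a) and (c) are fine and essentially match the paper's argument (the paper passes through the Besov characterisation $\|\nabla w_0\|_{\dot B^{-1}_{\infty,\infty}}\lesssim\|\nabla w_0\|_{L^2}$ for the $\sigma$-weighted piece of block (a), and uses $\|u\|_{L^\infty_tL^\infty}$ rather than $\|\nabla u\|_{L^\infty_tL^2}$ for $I_8$, but your variants work equally well). Block (b) for the $H^1$- and $L^2_tH^1$-components is also correct and close to the paper's $I_1$--$I_4$ treatment, including the $L^4_tL^4$ interpolation for the $\nabla u\cdot\nabla w$ term.

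The genuine gap is in your treatment of $\|B(u,w)\|_{L^\infty_tL^\infty}$. Your proposed split $\int_0^{t/2}+\int_{t/2}^t$ does not close for large $t$. On the near piece $[t/2,t]$ the only available pointwise bound for $\nabla w$ is $\|\nabla w(\tau)\|_{L^\infty}\le\sigma(\tau)^{-1}\|w\|_Y$, and for $\tau\ge t/2$ with $t$ large one has $\sigma(\tau)^{-1}\sim 1$, so the time convolution $\int_{t/2}^t(t-\tau)^{-1/2}\,d\tau\sim t^{1/2}$ is unbounded. Your alternative suggestion of spending a higher $L^q$ norm on the near piece does not help either: if you take $\|u\nabla w\|_{L^r}$ with $r<\infty$, the kernel becomes $(t-\tau)^{-1/2-1/r}$, and since the only control on $\|\nabla w(\tau)\|_{L^r}\lesssim\|\nabla w(\tau)\|_{H^1}$ is in $L^2_t$, the Cauchy--Schwarz pairing forces you to integrate $(t-\tau)^{-1-2/r}$ near $\tau=t$, which diverges for every finite $r$. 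You correctly flag that this is the crux, but the mechanism you sketch does not actually close.

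The paper's resolution is different from a $t/2$-split: it first splits the \emph{range of $t$} into $t\le 2$ and $t>2$. For $t\le 2$ one has $\sigma(\tau)^{-1}\le c\,\tau^{-1/2}$ and the Beta-type integral $\int_0^t(t-\tau)^{-1/2}\tau^{-1/2}\,d\tau$ is uniformly bounded. For $t>2$ one splits the \emph{integral} at $t-1$ (a fixed-length near piece, not a proportional one): on $[0,t-1]$ one uses the $L^1\!\to\!L^\infty$ kernel bound $(t-\tau)^{-3/2}$, which is integrable there since $t-\tau\ge 1$, together with $\|u\nabla w\|_{L^1}\le\|u\|_{L^\infty_tL^2}\|\nabla w\|_{L^\infty_tL^2}$; on $[t-1,t]$ one has $\tau>1$ so $\sigma(\tau)^{-1}<2$, and $\int_{t-1}^t(t-\tau)^{-1/2}\,d\tau=2$ is bounded independently of $t$. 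Replacing your $t/2$-split by this $t$-dependent two-regime argument is the missing idea.
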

 \begin{proof}
 We divide the whole proof into two parts concerning with
 $(e^{t\Delta}u_0,\, e^{-t}e^{t\Delta}w_0)$ and  $(-4cB(u,w),\, \frac{1}{4c}L(u))$, respectively.

 \noindent \textsc{Part I.}\; {\it Estimates for $\|e^{t\Delta}u_0\|_X$ and $\|e^{-t}e^{t\Delta}w_0\|_Y$.}
   As for $\|e^{t\Delta}u_0\|_X$, noticing that $e^{-ct|\xi|^2}\in L^\infty_tL^\infty_\xi$ and
   $e^{-ct|\xi|^2}\xi\in L^\infty_\xi L^2_t$, then by applying Lemma \ref{lem:2.1} and
   Young's inequality, we
   have
 \begin{align}
   \|e^{t\Delta}u_0\|_X
       & = \|e^{t\Delta}u_0\|_{L^\infty_tH^1}+\|e^{t\Delta}\nabla{u_0}\|_{L^2_tH^1}+\|e^{t\Delta}u_0\|_{L^\infty_tL^\infty}\nonumber\\
       &\le\|u_0\|_{H^1}+\|u_0\|_{H^1}+\|u_0\|_{L^\infty}\nonumber\\
       &\le c\|u_0\|_{H^1_b}.\label{eq:3-14}
 \end{align}
 Similarly, we have
 \begin{align}
   \|e^{-t}e^{t\Delta}w_0\|_{L^\infty_tH^1}+\|e^{-t}e^{t\Delta}\nabla{w}_0\|_{L^2_tH^1} \le {c}\|w_0\|_{H^1}.\label{eq:3-15}
 \end{align}
 Recall that $\sigma(t)=t^{\frac{1}{2}}(1+t)^{-\frac{1}{2}}$. Then we
 get
 \begin{align}
  \|{\sigma\nabla} e^{-t}e^{t\Delta}w_0\|_{L^\infty_tL^\infty}
  &=\sup_{t>0}t^{\frac{1}{2}}(1+t)^{-\frac{1}{2}}e^{-t}\|e^{t\Delta}\nabla{w}_0\|_{L^\infty}\nonumber\\
  &\le\sup_{t>0}t^{\frac{1}{2}}\|e^{t\Delta}\nabla{w}_0\|_{L^\infty} \!=\! \|\nabla{w}_0\|_{\dot{B}^{-1}_{\infty,\infty}}\nonumber\\
  &\le{c}\!\;\|\nabla w_0\|_{L^2}\le c\!\;\|w_0\|_{H^1},\label{eq:3-16}
 \end{align}
 where we have used Proposition \ref{def:2.4}, embedding theorems of Besov spaces (cf.
 \cite{Triebel:1983}).

 \noindent\textsc{Part II.}\; {\it Estimates for $\|B(u,w)\|_X$ and $\|L(u)\|_Y$.}
 As for $\|B(u,v)\|_X$, we get
 \begin{align}
 \|B(u,w)\|_X&=\|B(u,w)\|_{L^\infty_tH^1}+\|\nabla B(u,w)\|_{L^2_tH^1}+\|B(u,w)\|_{L^\infty_tL^\infty}\nonumber\\
 & \le \|\int_0^t\!e^{(t-\tau)\Delta} \nabla\cdot(u\nabla w) d\tau\|_{L^\infty_tL^2}
   +\|\int_0^t\!e^{(t-\tau)\Delta}\nabla\nabla\cdot(u\nabla w)
   d\tau\|_{L^\infty_tL^2}\nonumber\\
  &\;\;+\|\!\int_0^t\!e^{(t-\tau)\Delta}\nabla\nabla\cdot(u\nabla w)
   d\tau\|_{L^2_tL^2}\!+\|\!\int_0^t\!e^{(t-\tau)\Delta}\Delta\!\nabla\cdot(u\nabla w)
   d\tau\|_{L^2_tL^2}\nonumber\\
  &\;\;+ \|\!\int_0^t\!e^{(t-\tau)\Delta}\nabla\cdot(u\nabla w)
   d\tau\|_{L^\infty_tL^\infty}\nonumber\\
 &:=I_1+I_2+I_3+I_4+I_5,\nonumber\end{align}
 where by applying Lemma \ref{lem:2.3}, we have
 \begin{align}\label{eq:3-17}
 &I_1 \le c\, \|u\nabla w\|_{L^2_tL^2}\le{c}\,\|u\|_{L^\infty_tL^\infty}\|\nabla{w}\|_{L^2_tL^2},\\
 &I_2\le c\, \|u \nabla w\|_{L^\infty_tL^2}\le{c}\,\|u\|_{L^\infty_tL^\infty}\|\nabla w\|_{L^\infty_tL^2}
 \end{align}
 and by applying Lemma \ref{lem:2-2}, H\"older's inequality and interpolation theorem, we have
 \begin{align}
  &I_3\le{c}\,\|u\nabla{w}\|_{L^2_tL^2}\le{c}\,\|u\|_{L^\infty_tL^\infty}\|\nabla{w}\|_{L^2_tL^2},\label{eq:3-18}.\end{align}
 Similarly, we have
 \begin{align}
  &I_4\le{c}\|\nabla\cdot({u\nabla{w}})\|_{L^2_tL^2}
  \le{c}\hskip.04cm(\|\nabla{u}\cdot\nabla{w}\|_{L^2_tL^2} + \|u\Delta{w}\|_{L^2_tL^2})\nonumber\\
  &\;\;\;\le{c}\hskip.02cm(\|\nabla{u}\|_{L^4_tL^4}\|\nabla{w}\|_{L^4_tL^4}+\|u\|_{L^\infty_tL^\infty}\|\nabla{w}\|_{L^2_t\dot{H}^1})\nonumber\\
  &\;\;\;\le{c}\hskip.02cm(\|{u}\|_{L^{\infty}_tH^1}^{\frac{1}{2}}\|\nabla{u}\|_{L^2_t{H}^{1}}^{\frac{1}{2}}\|{w}\|_{L^{\infty}_tH^1}^{\frac{1}{2}}
   \|\nabla{w}\|_{L^2_t{H}^{1}}^{\frac{1}{2}} + \|u\|_{L^{\infty}_t\!L^{\!\infty}}\|\nabla{w}\|_{L^2_t{H}^1}),\label{eq:3-18'}
                       \end{align}
 where $\|f\|_{L^4_tL^4}^2\le
 c\hskip.01cm\|f\|_{L^{4}_t\dot{H}^{\frac{1}{2}}}^2\le{c}\hskip.01cm\|f\|_{L^{\infty}_tL^2}\|\nabla{f}\|_{L^2_t{H}^{1}}$.
  As for $I_5$, by splitting the time interval, we obtain that if $t>2$, then $1<t-1<\tau<t$, $1<\frac{1}{{\sigma(\tau)}}=
  \frac{\sqrt{1+\tau}}{\sqrt{\tau}}<2$ and
 \begin{align}
   I_5& = \|\int_0^t\!e^{(t-\tau)\Delta}\nabla\cdot(u\nabla w) d\tau\|_{L^\infty_tL^\infty}\nonumber\\
      & \le \|\!\int_0^{t-1}\!e^{(t-\tau)\Delta}\nabla\!\cdot(u\nabla w) d\tau\|_{L^{\!\infty}_tL^{\!\infty}}
       +\|\!\int_{t-1}^t\!e^{(t-\tau)\Delta}\nabla\cdot(u\nabla w) d\tau\|_{L^{\!\infty}_tL^{\!\infty}}\nonumber\\
      &\le c\int_0^{t-1}\!\!(t-\tau)^{-\frac{3}{2}}\|u\nabla{w}\|_{L^{1}}d\tau
        + c\int_{t-1}^t(t-\tau)^{-\frac{1}{2}}\|u\nabla w\|_{L^\infty}d\tau\nonumber\\
      &\le c\,(\|u\nabla w\|_{L^\infty_tL^1}+\|u\nabla w\|_{L^\infty_tL^\infty})\nonumber\\
      &\le c\,(\|u\|_{L^\infty_tL^2}\|\nabla{w}\|_{L^\infty_tL^2}+\|u\|_{L^\infty_tL^\infty}\|{\sigma\nabla}{w}\|_{L^\infty_tL^\infty});\label{eq:3-19}
   \end{align}
   Else if $0<t\le 2$ and $0<\tau<t$, then we get
   $\tau^{\frac{1}{2}}/2<{\sigma(\tau)}<\tau^{\frac{1}{2}}$ and
 \begin{align}
   I_5& = \|\int_0^t\!e^{(t-\tau)\Delta}\nabla\cdot(u\nabla w) d\tau\|_{L^\infty_tL^\infty}\nonumber\\
      & \le \|\int_0^{t}(t-\tau)^{-\frac{1}{2}}\frac{1}{{\sigma(\tau)}} {\sigma(\tau)}\|(u\nabla w)(\tau)\|_{L^\infty}d\tau\nonumber\\
      &\le c\!\int_0^t (t-\tau)^{-\frac{1}{2}}\tau^{-\frac{1}{2}} \|u\|_{L^\infty_tL^\infty}\|{\sigma\nabla}{w}\|_{L^\infty_tL^\infty}\nonumber\\
      &\le c\,\|u\|_{L^\infty_tL^\infty}\|{\sigma\nabla}{w}\|_{L^\infty_tL^\infty}.\label{eq:3-20}
 \end{align}
 In order to estimate $\|L(u)\|_Y$, we have
 \begin{align}\label{eq:3-21}
   \|L(u)\|_Y & = \|L(u)\|_{L^\infty_tH^1}+\|\nabla{L}(u)\|_{L^2_tH^1}+\|{\sigma\nabla}{w}\|_{L^\infty_tL^\infty} \nonumber\\
   :&= I_6+I_7+I_8,
 \end{align}
 where by applying Lemma \ref{lem:2.3} \eqref{eq:2.7} to $I_5$, we have
 \begin{align}
  I_6&=\|L(u)\|_{L^\infty_tH^1}=\|\int_0^te^{-t+\tau}e^{(t-\tau)\Delta}u\|_{L^\infty_tH^1}\nonumber\\
     &\le \sup_{t>0}\int_0^te^{-t+\tau}\|u\|_{H^1}d\tau\nonumber\\
     &\le c\|u\|_{L^\infty_tH^1}\label{eq:3-22}
 \end{align}
 and by applying \eqref{eq:2.7} to $\nabla{L(u)}=L(\nabla u)$ and $\Delta{L}(u)=\nabla{L}(\nabla u)$ with $\theta=0$ and $\theta=1$, we have
 \begin{align}\label{eq:3-23}
  I_7&=\|\nabla{L}(u)\|_{L^2_tH^1}\le\|\nabla{L}(u)\|_{L^2_tL^2}+\|\nabla\nabla {L}(u)\|_{L^2_tL^2}\nonumber\\
     &\le\|{L}(\nabla u)\|_{L^2_tL^2}+\|\nabla{L}(\nabla u)\|_{L^2_tL^2}\nonumber\\
     &\le c\,\|\nabla u\|_{L^2_tL^2}.
 \end{align}
 It remains to estimate $I_8=\|{\sigma\nabla}{w}\|_{L^\infty_tL^\infty}$.
 Recalling the definition of $w$, we get
 \begin{align}
  I_8&=\sup_{t>0}t^{\frac{1}{2}}(1+t)^{-\frac{1}{2}}\|\int_0^te^{-t+\tau}e^{(t-\tau)\Delta}\nabla{u}d\tau\|_{L^\infty}\nonumber\\
     &\le c\sup_{t>0}\int_0^t e^{-t+\tau}(t-\tau)^{-\frac{1}{2}}d\tau
       \|u\|_{L^\infty_tL^\infty}\nonumber\\
     &\le c\,\|u\|_{L^\infty_tL^\infty}.\label{eq:3-24}
 \end{align}
   Combining  \eqref{eq:3-14}--\eqref{eq:3-24}, we prove \eqref{eq:3-13} and hence finish the proof.
 \end{proof}
 \smallskip
\noindent \textbf{{Proof of Theorem \ref{thm:1.1}}}:\hskip.3cm
Applying Proposition \ref{pro:3.2}, following
 similar arguments as in Lemma \ref{lem:2.5} we can prove Theorem \ref{thm:1.2} and hence we omit the details.

\medskip\medskip
 \noindent {\bf Acknowledgmens:}
 Chao Deng is supported by PAPD of Jiangsu Higher Education Institutions and Jiangsu Normal University under Grant No.\! 9212112101;
 He is also supported by the China Natural Science Foundation under Grant No.\,11171357 and No.\,11271166;
 He would like to express his gratitude to the Applied Mathematics Department of Colorado
 University at Boulder for its hospitality. Congming Li is partially
 supported by the NSF grants DMS-0908097.

\vskip1cm {
 {\small Chao Deng,}\\
 {\small  Department of Mathematics, Jiangsu Normal University}\\
 {\small {\it Email: deng315@yahoo.com.cn}}\\
 {\small and }\\
 {\small Department of Applied Mathematics, Colorado University at Boulder.}\\
   \\ \\
 {\small Congming Li,}\\
 {\small Department of Applied Mathematics, Colorado University at Boulder}\\
   {\small  {\it Email: cli@colorado.edu}}\\
   {\small and}\\
 {\small Department of Mathematics and MOE-LSC, Shanghai Jiao Tong
 University.}\\

 }
 \date{}


\begin{thebibliography}{1}
\bibitem{BlanchetDolbeaultPerthame:20061}
 A.~Blanchet, J.~Dolbeault and B.~Perthame,\! {Two-dimensional Keller-Segel model: optimal critical mass and qualitative properties of the solutions}, \textit{Electron. J. Diff. Eqns.}, {\bf44}(2006), 1--32.
\bibitem{Deng:2012}
 C.~Deng, Global well-posedness of the parabolic-parabolic Keller-Segel model in $L^{p}(\mathbb{R}^2)\!\times\!{L}^{\!\infty}(\mathbb{R}^2)$ and
 $H^{-\hskip-0.02cm1}(\mathbb{R}^2)\times\!{H}^1(\mathbb{R}^2)$,
 preprint.
\bibitem{ChildressPercus:1981217}
 S.~Childress and J.K.~Percus, {Nonlinear aspects of chemotaxis}, \textit{Math. Biosci.}, {\bf56}(1981), 217--237.
\bibitem{ChildressPercus:198461}
 S.~Childress and J.K.~Percus, \textit{Chemotactic collapse in two dimensions}, Lecture Notes in Biomathematics, Vol. {\bf 55}, Springer, Berlin-Heidelberg-New York, pp. 61--66, 1984.
\bibitem{CorriasPerthameZaag:2003141}
 L.~Corrias, B.~Perthame and H.~Zaag, A chemotaxis model motivated by angiogenesis, {\it C. R. Acad. Sci. paris, Ser. I.}, {\bf 336}(2003), 141--146.
\bibitem{CorriasPerthameZaag:20041}
 L.~Corrias, B.~Perthame and H.~Zaag, Global solutions of some chemotaxis and angiogenesis systems in high space dimensions, {\it Milan J. Math.}, {\bf 72}(2004), 1--28.
\bibitem{DiazNagaiRakotoson:1998156}
 J.I.~Diaz, T.~Nagai and J.M.~Rakotoson, {Symmetrization techniques on unbounded domains: application to a chemotaxis system on $\mathbb{R}^n$}, \textit{J. Differential Equations}, {\bf 145}(1998), 156--183.
\bibitem{Eisenbach:2004}
 M.~Eisenbach. {\it Chemotaxis}, Imperial College Press, London, 2004.
\bibitem{GuoHwang:20101519}
 Y.~Guo and H.J.~Hwang, Pattern formation (I): the Keller-Segel model, {\it J. Differential Equations}, \textbf{249}(2010), 1519--1530.
\bibitem{Horstmann:2003103}
 D.~Horstmann, {From 1970 until present: the Keller-Segel model in chemotaxis and its consequences I.}, \textit{Jahresber. Dutsch. Math. Ver.}, {\bf 105}(2003), 103--165.
\bibitem{IshidaYokota:20122469}
 S.~Ishida and T.~Yokota, Global existence of weak solutions to quasilinear degenerate Keller-Segel system of parabolic-parabolic type with small data, {\it J. Differential Equations}, \textbf{252}(2012), 2469--2491.
\bibitem{KageiMaekawa:20122951}
 Y.~Kagei and Y.~Maekawa, On asymptotic behavior of solutions to parabolic systems modeling chemotaxis, {\it J. Differential Euqtions}, {\bf 253}(2012), 2951--2992.
\bibitem{Kato:1984471}
 T.~Kato, Strong ${L}^p$-solutions of the {N}avier-{S}tokes equation in $\mathbb{R}^m$, with applications to weak solutions, \textit{Math. Z.}, \textbf{187}(1984), 471--480.
\bibitem{KellerSegel:1970399}
 E.F.~Keller and L.A.~Segel, Initiation of slime mold aggregation viewed as an instability, {\it J. Theor. Biol.}, {\bf26}(1970), 399- -415.
\bibitem{KellerSegel:1971225}
 E.F.~Keller and L.A.~Segel, Model for chemotaxis, {\it J. Theor. Biol.}, {\bf 30}(1971), 225--234.
\bibitem{KellerSegel:1971235}
 E.F.~Keller and L.A.~Segel, Traveling bands of chemotactic bacteria: a theoretical analysis, {\it J. Theor. Biol.}, {\bf30}(1971), 235- -248.
\bibitem{KozonoSugiyama:20091}
 H.~Kozono and Y.~Sugiyama, Global strong solution to the semi-linear Keller-Segel system of parabolic-parabolic type with small data in scale invariant spaces, {\it J. Differential Equations}, {\bf247}(2009), 1--32.
\bibitem{KozonoSugiyama:20081467}
 H.~Kozono and Y.~Sugiyama, Keller-Segel system of parabolic-parabolic type with initial data in weak $L^{n/2}$ and its application to self-similar solutions, {\it Indiana Univ. Math. J.}, {\bf 57}(2008), 1467--1500.
\bibitem{KozonoSugiyamaWachi:20121213}
 H.~Kozono,Y.~Sugiyama and T.~Wachi, {Existence and uniqueness theorem on mild solutions to the Keller-Segel system in the scaling invariant space}, \emph{J. Differential Equations}, \textbf{252}(2012), 1213--1228.
\bibitem{LemarierRieusset:2002}
 P.G.~Lemari\'{e}-Rieusset, \textit{Recent developments in the Navier-Stokes problem}, Research Notes in Mathematics, Chapman \& Hall/CRC, 2002.
\bibitem{LevineSleeman:1997683}
 H.A.~Levine and B.~D.~Sleeman, {A system of reaction diffusion equations arising in the theory of reinforced random walks}, {\it SIAM J. Appl. Math.}, {\bf 57}(1997), 683--730.
\bibitem{LiPanZhao:2011}
 T.~Li, R.H.~Pan and K.~Zhao, Global dynamics of a chemotaxis model on bounded domains with large data, {\it SIAM J. Appl. Math.}, 2011.
\bibitem{L-W2}
 T.~Li and Z.A.~Wang, Asymptotic nonlinear stability of traveling waves to conservation laws arising from chemotaxis, {\it J. Differential Equations}, {\bf250}(2011), 1310--1333.
\bibitem{LinNiTakagi:19981}
 C.S.~Lin, W.M.~Ni and I.~Takagi, Large amplitude stationary solutions to a chemotaxis system, {\it J. Differential Equations}, {\bf 72}(1998), 1-27.
\bibitem{N-I}
 T.~Nagai and T.~Ikeda, Traveling waves in a chemotaxis model. {\it J. Math. Biol.}, {\bf30}(1991), 169--184.
\bibitem{NagaiSenbaYoshida:1997411}
 T.~Nagai, T.~Senba and K.~Yoshida, {Application of the Trudinger-Moser inequality to a parabolic system of chemotaxis}, {\it Funkcial. Ekvac.}, {\bf40}(1997), 411--433.
\bibitem{OthmerStevens:19971044}
 H.~Othmer and A.~Stevens, {Aggregation, blowup and collapse: the ABCs of taxis in reinforced random walks}, {\it SIAM J. Appl. Math.}, {\bf 57}(1997), 1044--1081.
\bibitem{Patlak:1953311}
 C.S.~Patlak, Random walk with persistence and external bias, {\it Bull. Math. Biophys.}, {\bf 15}(1953), 311-338.
\bibitem{SleemanWardWei:2005790}
 B.D.~Sleeman, M.~Ward and J.~Wei, Existence, stability, and dynamics of spike patterns in a chemotaxis model, {\it SIAM J. Appl. Math.}, {\bf 65}(2005), 790--817.
\bibitem{Triebel:1983}
 T.~Triebel, \textit{Theory of function spaces}, Basel, Birkh$\ddot{a}$user, 1983.
\bibitem{WangHillen:200845}
 Z.~A.~Wang and T.~Hillen,  Shock formation in a chemotaxis model, {\it Math. Meth. Appl. Sci.}, {\bf 31}(2008), 45--70.
\bibitem{Winkler:20102889}
 M.~Winkler, Aggresgation vs. global diffusive behavior in the higher-dimensional Keller-Segel model, {\it J. Differential Equations}, {\bf 248}(2010), 2889--2905.
\bibitem{Yagi:1997241}
 A.~Yagi, {Norm behavior of solutions to the parabolic system of chemotaxis}, {\it Math. Japonica}, {\bf 45}(1997), 241--265.
\bibitem{YangChenLiuSleeman:2005432}
 Y.~Yang, H.~Chen, W.~Liu and B.D.~Sleeman, The solvability of some chemotaxis systems, {\it J. Differential Equations}, {\bf212}(2005), 432--451.
\end{thebibliography}
\end{document}